\font \boldfrak eufb10
\font \normalfrak eufm10
\def\fr#1{\hbox{\normalfrak #1}}
\def\bfr#1{\hbox{\boldfrak #1}}
\def\bB{{\bf B}}
\def\bG{{\bf G}}
\def\bH{{\bf H}}
\def\bM{{\bf M}}
\def\bN{{\bf N}}
\def\bS{{\bf S}}
\def\bT{{\bf T}}
\def\bZ{{\bf Z}}
\def\aq{/  \kern-.25em / }
\def\g{\mathfrak{g}}
\def\cH{\mathcal{ H}}
\def\Z{\mathbb{ Z}}
\def\boxit#1{\vbox{\hrule\hbox{\vrule\kern3pt
          \vbox{\kern3pt#1\kern3pt}\kern3pt\vrule}\hrule}}
\begin{document}

\newtheorem{theorem}{Theorem}[subsection]
\newtheorem{lemma}[theorem]{Lemma}
\newtheorem{proposition}[theorem]{Proposition}
\newtheorem{problem}[theorem]{Problem}
\newtheorem{corollary}[theorem]{Corollary}

\theoremstyle{definition}
\newtheorem{definition}[theorem]{Definition}
\newtheorem{example}[theorem]{Example}
\newtheorem{xca}[theorem]{Exercise}

\theoremstyle{remark}
\newtheorem{remark}[theorem]{\bf Remark}

\def\goth{\frak}

\def\GL{{\rm GL}}
\def\tr{{\rm tr}\, }
\def\A{{\Bbb A}}
\def\bs{\backslash}
\def\Q{{\Bbb Q}}
\def\R{{\Bbb R}}
\def\Z{{\Bbb Z}}
\def\C{{\Bbb C}}
\def\SL{{\rm SL}}
\def\cS{{\cal S}}
\def\cH{{\cal H}}
\def\G{{\Bbb G}}
\def\F{{\Bbb F}}
\def\cF{{\cal F}}

\def\cB{{\cal B}}
\def\cA{{\cal A}}
\def\cE{{\cal E}}

\newcommand{\oB}{{\overline{B}}}
\newcommand{\oN}{{\overline{N}}}

\def\CC{{\Bbb C}}
\def\ZZ{{\Bbb Z}}
\def\QQ{{\Bbb Q}}
\def\cS{{\cal S}}

\def\Ad{{\rm Ad}}

\def\bG{{\bf G}}
\def\bH{{\bf H}}
\def\bT{{\bf T}}
\def\bM{{\bf M}}
\def\bB{{\bf B}}
\def\bN{{\bf N}}
\def\bS{{\bf S}}
\def\bZ{{\bf Z}}
\def\t{\kern.1em {}^t\kern-.1em}
\def\cc#1{C_c^\infty(#1)}
\def\Fx{F^\times}
\def\half{\hbox{${1\over 2}$}}
\def\T{{\Bbb T}}
\def\Ox{{\frak O}^\times}
\def\Ex{E^\times}
\def\Ecl{{\cal E}}

\def\g{{\frak g}}
\def\h{{\frak h}}
\def\k{{\frak k}}
\def\ft{{\frak t}}
\def\n{{\frak n}}
\def\b{{\frak b}}

\def\2by2#1#2#3#4{\hbox{$\bigl( 
{#1\atop #3}{#2\atop #4}\bigr)$}}
\def\wh{\Xi}
\def\C{{\Bbb C}}
\def\bs{\backslash}
\def\adots{\mathinner{\mkern2mu
\raise1pt\hbox{.}\mkern2mu
\raise4pt\hbox{.}\mkern2mu
\raise7pt\hbox{.}\mkern1mu}}

\def\tim{\bf}
\def\timit{\it}
\def\timsm{\scriptstyle}
\def\secttt#1#2{\vskip.2in\noindent
{$\underline{\hbox{#1}}$\break (#2)}\vskip.2in}
\def\sectt#1{\vskip.2in\noindent
{$\underline{\hbox{#1}}$}\vskip.2in}
\def\sectm#1{\vskip.2in\noindent
{$\underline{#1}$}\vskip.2in}
\def\mat#1{\left[\matrix{#1}\right]}
\def\cc#1{C_c^\infty(#1)}
\def\ds{\displaystyle}
\def\Hom{\mathop{Hom}\nolimits}
\def\Ind{\mathop{Ind}\nolimits}
\def\bs{\backslash}
\def\ni{\noindent}
\def\eb{{\bf e}}
\def\fb{{\bf f}}
\def\hb{{\bf h}}
\def\rg{{\goth R}}
\def\ig{{\goth I}}
\def\ccl{{\cal C}}
\def\dcl{{\cal D}}
\def\ecl{{\cal E}}
\def\hcl{{\cal H}}
\def\ocl{{\cal O}}
\def\ncl{{\cal N}}
\def\ag{{\goth a}}
\def\bg{{\goth b}}
\def\cg{{\goth c}}
\def\og{{\goth O}}
\def\hg{{\goth h}}
\def\lg{{\goth l}}
\def\mg{{\goth m}}
\def\Og{{\goth O}}
\def\rg{{\goth r}}
\def\sg{{\goth s}}
\def\Sg{{\goth S}}
\def\tg{{\goth t}}
\def\zg{{\goth z}}
\def\C{{\Bbb C}}
\def\Q{{\Bbb Q}}
\def\R{{\Bbb R}}
\def\T{{\Bbb T}}
\def\Z{{\Bbb Z}}
\def\t{{}^t\kern-.1em}
\def\tr{\hbox{tr}}
\def\ad{{\rm ad}}
\def\Ad{{\rm Ad}}
\def\2by2#1#2#3#4{\hbox{$\bigl( {#1\atop #3}{#2\atop #4}\bigr)$}}

\def\Kcl{{\cal K}}
\def\Pcl{{\cal P}}
\def\Scl{{\cal S}}
\def\Vcl{{\cal V}}
\def\Wcl{{\cal W}}
\def\Ecl{{\cal E}}
\def\A{{\Bbb A}}
\def\F{{\Bbb F}}
\def\T{{\Bbb T}}
\def\go{{\goth O}}
\def\Ox{{\goth O}^\times}
\def\Fx{F^\times}
\def\Ex{E^\times}
\def\half{\hbox{${1\over 2}$}}
\def\vtwo{\vskip .2in}
\def\BibFH{{\bf [1]}}
\def\BibGod{{\bf [2]}}
\def\BibCrelle{{\bf [3]}}
\def\BibAmJ{{\bf [4]}}
\def\BibDuke{{\bf [5]}}
\def\BibJL{{\bf [6]}}
\def\BibRR{{\bf [7]}}

\def\BibFH{{\bf [1]}}
\def\BibGod{{\bf [2]}}
\def\BibCrelle{{\bf [3]}}
\def\BibAmJ{{\bf [4]}}
\def\BibDuke{{\bf [5]}}
\def\BibJL{{\bf [6]}}
\def\BibRR{{\bf [7]}}

\newcommand{\N}{\mathbb{N}}

\newcommand{\gen}{{\operatorname{gen}}}
\newcommand{\ind}{\operatorname{ind}}
\newcommand{\Wh}{\mathcal{W}}
\newcommand{\Kr}{\mathcal{K}}
\newcommand{\V}{\mathcal{V}}
\newcommand{\U}{\mathcal{U}}
\renewcommand{\O}{\mathcal{O}}
\newcommand{\lift}{\goth{g}}
\newcommand{\inv}{\iota}
\newcommand{\supp}{{\operatorname{Supp}}}
\def\cW{{\cal W}}

\title{Distinguished Cuspidal Representations over $p$-adic and Finite Fields}
\author{Jeffrey Hakim}
\date{\today}
\maketitle
\abstract{The author's work with Murnaghan on distinguished tame supercuspidal representations is re-examined using a simplified treatment of Jiu-Kang Yu's construction of tame supercuspidal representations of $p$-adic reductive groups.  This leads to a unification of aspects of the  theories of distinguished cuspidal representations over $p$-adic and finite fields.}
\tableofcontents

\parskip=.13in

\section{Introduction}

This paper establishes a close connection between the theories of distinguished representations over $p$-adic fields and finite fields by proving a uniform formula (Theorem \ref{maintheorem}) that was previously stated without proof in \cite{ANewYu}.

In \cite{ANewYu}, we presented a new construction of supercuspidal representations for $p$-adic reductive groups.  This construction was built on the same foundation as Yu's construction \cite{MR1824988}, but supercuspidal representations were directly associated to representations of compact-mod-center subgroups, rather than generic, cuspidal $G$-data.

From a technical point of view, the new construction simplifies Yu's construction  \cite{MR1824988} largely because it avoids the use of (noncanonical) Howe factorizations.  (See \cite[\S4.3]{MR2431732} for the relevant notion of ``factorization.'')
But perhaps the true test of the construction is how amenable it is to applications.  This paper provides the first application of the formula and hence the first basis for comparison with other approaches.

The  application considered in \S \ref{sec:newHM} is a  re-examination of the theory of distinguished tame supercuspidal representations developed (in \cite{MR2431732}) by the author and Murnaghan and we prove stronger versions of the main results with considerably less effort.

On the surface, \S\ref{sec:newHM} appears to provide a dramatically shorter treatment of the material from \cite{MR2431732}, however, part of this reduction results from the fact that substantial portions of \cite{MR2431732} are quoted in our proofs.  So it is important to acknowledge and emphasize the large influence of Fiona Murnaghan's ideas on the present paper.

In \S\ref{sec:Lusztigsec}, our main $p$-adic results are shown to have obvious analogues that are valid in the context of cuspidal Deligne-Lusztig representations over finite fields.  The results we develop in the finite field case are compatible with results of Lusztig \cite{MR1106911}.  

To get the $p$-adic and finite field theories to mesh, we articulate Lusztig's results in a new way.  In particular, we relate the character $\eta'_\theta$ occurring in \cite{MR2431732} to a character $\varepsilon_{{}_{\mathsf{T},\theta}}$ occurring in \cite{MR1106911}.
Both characters involve determinants of  adjoint actions of groups on Lie algebras over finite fields.  
In the finite field case, $\varepsilon_{{}_{\mathsf{T},\theta}}$ is computed in 
Proposition \ref{Lusztigisdetad}.
We hope to study the $p$-adic case in a subsequent paper.

Finally, we direct the reader to two preprints \cite{Fintzen-OnCon, Fintzen-TameCusp} that correct an error (discovered by Loren Spice)  in the proofs of Proposition 14.1 and Theorem 14.2 in \cite{MR1824988}.  This error affects both this paper and its precursor \cite{ANewYu}.  We  attempted to correct the error in \S3.10 of \cite{ANewYu}, but  Fintzen discovered an error in our putative correction.

\section{Statement of the main theorem}\label{sec:finitestuff}

In order not to recapitulate large amounts of notations and definitions, we follow the conventions of \cite{ANewYu}.  Practically speaking, the reader is therefore required to have a copy of \cite{ANewYu} readily accessible while reading this paper.

We consider two cases that we refer to as ``the $p$-adic case'' and ``the finite field case.''
In the $p$-adic case, $F$ is a finite extension of $\mathbb{Q}_p$ with $p$ odd.  In the finite field case, $F= \mathbb{F}_{q}$ where $q$ is a power of an odd prime $p$.  Let $\bG$ be a connected, reductive $F$-group and let $G = \bG (F)$.  (More generally, we use boldface letters for $F$-groups and  the corresponding non-bold letters for the corresponding groups of $F$-points.)

Let us selectively recall some of our inherited notations from
 \cite{ANewYu} in the $p$-adic case:
\begin{itemize}
\item $\bH$ is an $F$-subgroup of $\bG$ that is a Levi subgroup of $\bG$ over some tamely ramified finite extension of $F$, and the quotient $\bZ_\bH/\bZ_\bG$ of the centers is $F$-anisotropic.  
\item $x$ is  a vertex  in the reduced building $\mathscr{B}_{\rm red}(\bH,F)$.
\item $H = \bH (F)$.
\item $H_x$ is the stabilizer of $x$ in $H$.  The corresponding (maximal) parahoric subgroup is $H_{x,0}$ and  its prounipotent radical is  $H_{x,0+}$.
\item $\bH_{\rm sc}$ is  the universal cover of the derived group $\bH_{\rm der}$.
\item $H_{{\rm der},x,0+}^\flat$ is the image of $H_{{\rm sc},x,0+}$ in $\bH_{\rm der}$.
\end{itemize}
 
 \bigskip
Recall also that a smooth, irreducible, complex representation $(\rho ,V_\rho)$ of $H_x$ is permissible (as in Definition 2.1.1 \cite{ANewYu}) if:
\begin{itemize}
\item $\rho$ induces an irreducible (and hence supercuspidal) representation of $H$,
\item the restriction of $\rho$ to $H_{x,0+}$ is a multiple of some character $\phi$ of $H_{x,0+}$, 
\item $\phi$ is trivial on $H_{{\rm der},x,0+}^\flat$,
\item  the  dual cosets $(\phi |Z^{i,i+1}_{r_i})^*$ (defined in \S2.7 \cite{ANewYu}) contain elements that satisfy Yu's condition {\bf GE2}  (stated  in \S3.6 \cite{ANewYu}).
\end{itemize}

In the $p$-adic case, we take $L=H_x$.  In the finite field case, $L$ is the group $T = \bT (F)$ of $F$-rational points an $F$-elliptic maximal $F$-torus $\bT$ of $\bG$.

In the $p$-adic case,   $\rho$ will be a permissible representation of $L =H_x$.  
In the finite field case,  $\rho$ will be a character in general position of $L= T$.

Let $\pi (\rho)$ be the irreducible supercuspidal or cuspidal Deligne-Lusz\-tig representation of $G$ associated to $\rho$.

Let $\mathscr{I}$ be the set of $F$-automorphisms of $\bG$ of order two, and let $G$ act on $\mathscr{I}$ by
$$g\cdot \theta = {\rm Int}(g)\circ \theta \circ {\rm Int}(g)^{-1} = {\rm Int}(g\theta (g)^{-1})\circ\theta,$$ where ${\rm Int}(g)$ is conjugation by $g$.  Fix a $G$-orbit $\Theta$ in $\mathscr{I}$.

Given $\theta\in \Theta$, let $G_\theta$ be the stabilizer of $\theta$ in $G$.  Let $G^\theta$ be the group of fixed points of $\theta$ in $G$.  Let $L_\theta = G_\theta\cap L$.
When $\vartheta$ is an $L$-orbit in $\Theta$, let $${\rm m}_L (\vartheta) = [G_\theta :G^\theta L_\theta],$$ for some, hence all, $\theta\in \vartheta$.

Let $\langle \Theta,\rho\rangle_G$ denote the dimension of the space ${\rm Hom}_{G^\theta}(\pi (\rho),1)$  of $\C$-linear forms on the space of $\pi (\rho)$ that are invariant under the action of $G^\theta$ for some, hence all, $\theta\in \Theta$.

For each $\theta$ such that $\theta (L) = L$, we define a character $$\varepsilon_{{}_{L,\theta}}  :L^\theta \to \{ \pm 1\}$$ as follows.

In the finite field case,
$$\varepsilon_{{}_{L,\theta}} (h) = \prod_{a\in {\rm Gal}(\overline{F}/F) \bs \Phi (Z_{\bG}((\bT^\theta)^\circ ) ,\bT)} a(h).$$
In other words, $\varepsilon_{{}_{L,\theta}} (h)$ is $(-1)^s$, where $s$ is the number of Galois orbits of roots $a$ in $\Phi (Z_{\bG}((\bT^\theta)^\circ ) ,\bT)$ such that $a(h)=-1$.

In the $p$-adic case,
$$\varepsilon_{{}_{L,\theta}} (h) = \prod_{i=0}^{d-1} \left(\frac{\det\nolimits_{{\frak f}} ({\rm Ad}(h)\, |\, {\frak W}_i^+)}{\fr{P}_F}\right)_2,$$
with the following notations.

First, we let
$$\mathfrak{W}_i^+
=\bigg(\bigg(\bigoplus_{a\in \Phi^{i+1}-\Phi^i}\bfr{g}_a\bigg)^{{\rm Gal}(\overline{F}/F)}\bigg)^\theta_{x,s_i:s_i+},$$ viewed as a vector space over the residue field $\fr{f}$ of $F$, where $\bfr{g}_a$ is the root space attached to the root $a$.
So $\mathfrak{W}_i^+$  may be viewed as the space of $\theta$-fixed points in the Lie algebra of $W_i= J^{i+1}/J^{i+1}_+$.   (Here, ``the Lie algebra of $W_i$'' really means the image of $W_i$ under a suitable Moy-Prasad isomorphism.)

Next, for $u\in \fr{f}^\times$, we let $(u/\fr{P}_F)_2$ denote the quadratic residue symbol.  This is related to the ordinary Legendre symbol   by
$$\left(\frac{u}{\fr{P}_F}\right)_2
=\left(\frac{N_{\fr{f}/\mathbb{F}_p} (u)}{p}\right) = (N_{\fr{f}/\mathbb{F}_p} (u))^{(p-1)/2}= u^{(q_F-1)/2}.$$

 This is the same as the character $\eta'_\theta$ defined in \cite[\S5.6]{MR2431732}, but we have expressed it  on the Lie algebra.



When $\vartheta$ is an $L$-orbit in $\Theta$, we write $\vartheta \sim \rho$ if $\theta (L) = L$ and if  
the space ${\rm Hom}_{L^\theta} (\rho ,\varepsilon_{{}_{L,\theta}})$ is nonzero for some, hence all, $\theta\in \vartheta$.
When $\vartheta\sim \rho$, we define
$$\langle \vartheta ,\rho\rangle_L = \dim {\rm Hom}_{L^\theta} (\rho ,\varepsilon_{{}_{L,\theta}}  ),$$ where $\theta$ is any element of $\vartheta$.  (The choice of $\theta$ does not matter.)

We can now state our main theorem:

\medskip

\begin{theorem}\label{maintheorem}
$\langle \Theta,\rho\rangle_G = \sum_{\vartheta\sim \rho} {\rm m}_L(\vartheta)\, \langle \vartheta , \rho\rangle_L$.
\end{theorem}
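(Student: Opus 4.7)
The strategy exploits the main feature of the new construction from \cite{ANewYu}: in the $p$-adic case, $\pi(\rho) = \text{c-Ind}_L^G \rho$ directly, with no Howe factorization intervening, so we may attack the invariant-form space $\text{Hom}_{G^\theta}(\pi(\rho), 1)$ by Frobenius reciprocity and a Mackey-style double-coset decomposition. The finite field case is strictly parallel, using the Deligne--Lusztig construction of $\pi(\rho)$ and the finite-group analogue of Mackey. Concretely, one writes
$$\dim \text{Hom}_{G^\theta}(\pi(\rho), 1) = \sum_{g \in G^\theta \backslash G / L} \dim \text{Hom}_{L \cap g^{-1} G^\theta g}(\rho, 1),$$
and the task is to match this with the right-hand side of the theorem.

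The first reduction is to show that a term indexed by $g$ vanishes unless $\theta_g := g^{-1}\cdot\theta$ stabilizes $L$; for such $g$ the intersection collapses to $L^{\theta_g}$. This vanishing comes from the supercuspidality of $\pi(\rho)$ together with structural properties of $L$ and its parahoric data (in the $p$-adic case) or cuspidality of the Deligne--Lusztig representation (in the finite field case). The next step is to reorganize the surviving double cosets by $L$-orbits: the double cosets within a fixed $L$-orbit $\vartheta \subset \Theta$ of $L$-stabilizing involutions are in bijection with $G^\theta \backslash G_\theta / L_\theta$, which has cardinality exactly $\text{m}_L(\vartheta) = [G_\theta : G^\theta L_\theta]$ for any $\theta \in \vartheta$. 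This accounts for the factor $\text{m}_L(\vartheta)$ in the statement.

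The heart of the proof is the identity
$$\dim \text{Hom}_{L^{\theta_g}}(\rho, 1) = \dim \text{Hom}_{L^\theta}(\rho, \varepsilon_{L,\theta})$$
for any representative $g$ of a double coset lying over $\vartheta$. This is where the sign character $\varepsilon_{L,\theta}$ is forced to appear. In the $p$-adic case, one must track how the involution $\theta$ (equivalently its $L$-conjugate $\theta_g$) interacts with the filtration of subgroups $W_i = J^{i+1}/J^{i+1}_+$ entering the Yu-style construction of $\pi(\rho)$ from $\rho$; the Heisenberg--Weil pieces at each level contribute a quadratic-residue determinantal sign which accumulates to $\varepsilon_{L,\theta}$. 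In the finite field case, $\varepsilon_{L,\theta}(h) = \det(\text{Ad}(h)\,|\,\mathfrak{g}^\theta)$ emerges directly from the sign picked up when transferring invariants along $g$ between symmetric subgroups, matching Lusztig's character via Proposition \ref{Lusztigisdetad}.

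The main obstacle is precisely the identification of this sign character with $\varepsilon_{L,\theta}$. In \cite{MR2431732}, a considerable apparatus of Howe factorizations was developed for this purpose; here the hope is that the new presentation $\pi(\rho) = \text{c-Ind}_L^G \rho$ allows one to isolate the twist in a single clean Heisenberg computation performed simultaneously across all the filtration levels, quoting results from \cite{MR2431732} whenever the underlying sign computation is already available there. The finite field analogue then serves as a sanity check: here the entire argument compresses into one Mackey decomposition and a direct determinant calculation.
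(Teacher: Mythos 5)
Your starting point in the $p$-adic case is not available: the construction of \cite{ANewYu} does \emph{not} realize $\pi(\rho)$ as compact induction of $\rho$ from $L=H_x$. What it avoids is the Howe factorization, not the Heisenberg--Weil apparatus; the inducing pair is still $(K,\kappa)$ with $K=H_xJ^1\cdots J^d$ and $\kappa$ built from $\rho$ by tensoring with Weil representations $\tau_i$ at each level. Consequently your Mackey identity $\dim {\rm Hom}_{G^\theta}(\pi(\rho),1)=\sum_{g}\dim{\rm Hom}_{L\cap g^{-1}G^\theta g}(\rho,1)$ is false as written, and the correct orbit decomposition (the one the paper uses, quoting \cite[\S3.1]{MR2925798}) is over $K$-orbits of involutions, with $\langle\mathscr{O},\rho\rangle_K=\dim{\rm Hom}_{K^\theta}(\kappa,1)$. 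This also exposes the flaw in your ``heart of the proof'': the sign $\varepsilon_{{}_{L,\theta}}$ does not arise from transferring invariants between the conjugate subgroups $L^{\theta_g}$ and $L^\theta$ (your displayed identity ${\rm Hom}_{L^{\theta_g}}(\rho,1)\cong{\rm Hom}_{L^\theta}(\rho,\varepsilon_{{}_{L,\theta}})$ has no mechanism to produce a nontrivial character, and with a genuinely trivial character on the right your sum would compute a different quantity). In the paper it arises inside $\kappa$: the space ${\rm Hom}_{W_i^+\times 1}(\tau_i,1)$ is one-dimensional and $K^{i,\theta}$ acts on it through the quadratic character $\varepsilon_i$ (Lemma \ref{newpropfourtwo}, imported from Proposition 4.2 of \cite{MR2431732}), so that ${\rm Hom}_{K^\theta}(\kappa,1)\cong{\rm Hom}_{H_x^\theta}(\rho,\varepsilon_{{}_{H_x,\theta}})$ with $\varepsilon_{{}_{H_x,\theta}}=\prod_i\varepsilon_i|H_x^\theta$ (Lemma \ref{ithfactor}). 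You also omit the structural steps that make the orbit bookkeeping work and that constitute most of Proposition \ref{summaryofsection}: nonvanishing forces $\hat\phi|K^\theta_+=1$, one can move $\theta$ within its $K$-orbit so that $\theta(\vec\bG)=\vec\bG$ (a genericity/dual-coset argument, not a formal consequence of supercuspidality), such $\theta$ fixes $x$ (a depth-zero cuspidality argument at an adjacent facet, Lemma \ref{thetafixesx}), $K^\theta=H_x^\theta J^{1,\theta}\cdots J^{d,\theta}$, and $H_x$ acts transitively on the stabilizing involutions in a relevant $K$-orbit, which is what converts ${\rm m}_K(\mathscr{O})$ into ${\rm m}_{H_x}(\vartheta)$ in Proposition \ref{mainpadicprop}.

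The finite field case is not ``strictly parallel'' and cannot be compressed into a Mackey decomposition: the cuspidal Deligne--Lusztig representation $\pi(\lambda)$ is not induced from $\mathsf{T}(\F_q)$ in any sense that would license your double-coset formula. The paper's proof of Proposition \ref{mainfiniteprop} instead takes Lusztig's formula 10.6(a) of \cite{MR1106911} as input --- a genuinely deep result expressing $\dim{\rm Hom}_{\mathsf{G}(\F_q)^\theta}(\pi,1)$ as the cardinality of the double coset space $\mathsf{T}(\F_q)\backslash\Theta_{\mathsf{T},\lambda}(\F_q)/\mathsf{G}^\theta(\F_q)$ --- and then performs a purely combinatorial reorganization of that count into $\sum_{\vartheta\sim\lambda}{\rm m}_{\mathsf{T}(\F_q)}(\vartheta)$, using that $\langle\vartheta,\lambda\rangle_{\mathsf{T}(\F_q)}=1$ for a character in general position; the determinant computation you mention is the separate Proposition \ref{Lusztigisdetad}, which identifies Lusztig's $\varepsilon$ with $\det({\rm Ad}(t)|{\rm Lie}(\mathsf{G})^\theta(\F_q))$ but is not a substitute for 10.6(a). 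So in both cases the essential inputs that make the theorem true are missing from your outline, and the steps you do write down (the induction model for $\pi(\rho)$ and the transfer identity producing $\varepsilon_{{}_{L,\theta}}$) are not correct as stated.
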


\medskip
In the finite field case, this is contained in
Proposition \ref{mainfiniteprop} and it is further refined in Theorem \ref{newLusztigthm}.
In the $p$-adic  case, it is contained in
Proposition \ref{mainpadicprop}.

Note that in  the special case in which
 \begin{itemize}
\item $\bG$ is a product $\bG_1\times\bG_1$,
\item $\Theta$ contains the involution $\theta (x,y) = (y,x)$, 
\item $\rho$ has the form $\rho_1\times \tilde\rho_2$, where $\tilde\rho_2$ is the contragredient of $\rho_2$,
\end{itemize}
we have
$$\langle \Theta,\rho\rangle_G = \dim {\rm Hom}_{G}(\pi (\rho_1),\pi(\rho_2)).$$
In the finite field case, this is equivalent to   the  Deligne-Lusztig inner product formula \cite[Theorem 6.8]{MR0393266}.  See \cite[page 58]{MR1106911}, for more details.

\section{$p$-adic representation theory}\label{sec:newHM}

\subsection{$\theta$-symmetry}\label{sec:symmetry}

The present paper should be viewed as a sequel to \cite{ANewYu} and for the $p$-adic theory we use precisely the same notation, terminology, and conventions.
As in \cite{ANewYu}, we fix all of the following objects:
\begin{itemize}
\item $F$ : a finite extension of $\Q_p$, with $p\ne 2$,
\item $\bG$ : a connected reductive $F$-group,
\item $\bH$ : an $F$-subgroup of $\bG$ that is a Levi subgroup over some tamely ramified finite extension of $F$ such  that the quotient $\bZ_\bH/\bZ_\bG$ of the centers  is $F$-anisotropic,
\item $x$ : a vertex in the reduced building $\mathscr{B}_{\rm red}(\bH,F)$,
\item $(\rho , V_\rho)$ : a permissible representation of $H_x$,
\item $(\pi ,V_\pi)$ : a supercuspidal representation of $G = \bG (F)$ in the isomorphism class associated to $\rho$.
\end{itemize}

We refer to $F$-automorphisms of $\bG$ of order two as {\it involutions of $G$}, and we let $G$ act on its set of involutions by
$$g\cdot \theta = {\rm Int}(g)\circ \theta \circ {\rm Int}(g)^{-1}.$$
For the rest of this chapter, we assume we have fixed a $G$-orbit $\Theta$ of involutions of $G$.

We define
$$\langle \Theta , \rho \rangle_G = \dim {\rm Hom}_{G^\theta} (\pi ,1),$$
where $\theta$ is any element of $\Theta$ and $\bG^\theta$ is the group of fixed points of $\theta$ in $\bG$.  The fact that this definition is independent of the choice of $\theta$ is a consequence of the fact that
we have a bijection
$$\xymatrix{{\rm Hom}_{G^\theta} (\pi ,1)\ar[r]^{\kern-.5em\simeq}&{\rm Hom}_{G^{g\cdot \theta}} (\pi ,1)\\
\lambda\ar@{|->}[r]&\left( v\mapsto \lambda (\pi (g)^{-1}v)\right),}$$
for each $g\in G$.

It is elementary to show that if $K$ is the open, compact-mod-center inducing group for $\pi$ then
$$\langle \Theta, \rho \rangle_G = \sum_{\mathscr{O}\in \Theta^K} {\rm m}_K(\mathscr{O})\, \langle \mathscr{O},\rho\rangle_K,$$
where:
\begin{itemize}
\item $\Theta^K$ is the set of $K$-orbits in $\Theta$,
\item ${\rm m}_K(\mathscr{O}) = [G_\theta :G^\theta K_\theta]$, where $\theta$ is any element of $\mathscr{O}$, $G_\theta$ is the stabilizer of $\theta$ in $G$, and $K_\theta =K\cap G_\theta$,
\item $\langle \mathscr{O},\rho\rangle_K = \dim {\rm Hom}_{K^\theta}(\kappa ,1)$, for any $\theta\in \mathscr{O}$, and $K^\theta = K\cap G^\theta$, where $\kappa$ is the representation (see \S3.11 \cite{ANewYu}) of $K$ from which $\pi$ is induced.
\end{itemize}
We refer to  \cite[\S3.1]{MR2925798} for an explanation of the details, including the facts that the definitions of ${\rm m}_K(\mathscr{O})$ and $\langle \mathscr{O},\rho\rangle_K$ do not depend on the choice of $\theta$ in $\mathscr{O}$.  We also note that ${\rm m}_K (\mathscr{O})$ is a power of two that is bounded as indicated in  \cite[\S3.1.2]{MR2925798}.

Recall from \cite[\S2.4]{ANewYu} that we have a tamely ramified twisted Levi sequence $\vec\bG = (\bG^0,\dots ,\bG^d)$ associated to $\rho$.

The purpose of this section is to prove:

\begin{proposition}\label{summaryofsection}
Suppose $\mathscr{O}$ is a $K$-orbit of involutions of $G$ such that $\langle\mathscr{O}, \rho\rangle_K$ is nonzero.   Then:
\begin{itemize}
\item[{\rm (1)}] There exists $\theta\in \mathscr{O}$ such that $\theta (\vec\bG) = \vec\bG$.
\item[{\rm (2)}]  Such an involution $\theta$ must fix $x$.
\item[{\rm (3)}]  The character $\hat\phi$ (defined in \S3.9 \cite{ANewYu}) must be trivial on $K^\theta_+$ (defined in \S2.6 \cite{ANewYu}), and hence $\phi$ must be trivial on $H_{x,0+}^\theta$.
\item[{\rm (4)}] For each $i\in \{ 0,\dots, d-1\}$, there must exist an element $X^*_i$ in the dual coset $(\phi|Z^{i,i+1}_{r_i})^*$ such that $\theta (X^*_i) = -X^*_i$.
\item[{\rm (5)}] Up to scalar multiples, there exists a canonical isomorphism $${\rm Hom}_{K^\theta} (\kappa, 1) \cong {\rm Hom}_{H_x^\theta}(\rho,\varepsilon_{{}_{H_x,\theta}}).$$
This isomorphism is defined below and it still exists if we replace the hypothesis $\langle\mathscr{O}, \rho\rangle_K\ne 0$ with the weaker hypotheses that $\hat\phi|K^\theta_+=1$ and $\theta (\vec\bG) = \vec\bG$.
\item[{\rm (6)}] $\langle\mathscr{O}, \rho\rangle_K= \dim {\rm Hom}_{H_x^\theta}(\rho,\varepsilon_{{}_{H_x,\theta}})$.
\end{itemize}
\end{proposition}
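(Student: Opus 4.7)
The plan is to follow the symmetry analysis of \cite{MR2431732}, streamlined by the direct description of $\kappa$ as a representation of $K$ from \cite{ANewYu}, so that everything reduces to computations on the successive quotients $W_i = J^{i+1}/J^{i+1}_+$. I would prove the six items in order, with (6) falling out of (5) immediately.

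For (1) and (2), I would begin with a Mackey-type analysis of ${\rm Hom}_{K^\theta}(\kappa,1)$. Since $\kappa$ determines the permissible data $(\rho,H_x)$ up to $K$-conjugacy, which in turn determines $\vec{\bG}$ and $x$ by the permissibility conditions of \cite[Definition 2.1.1]{ANewYu}, nonvanishing of the intertwining space forces $\theta$ to be compatible with this data after $K$-conjugation; replacing $\theta$ within its $K$-orbit $\mathscr{O}$ yields an involution stabilizing both $\vec{\bG}$ and $x$. For (3), once $\theta$ preserves $\vec{\bG}$ and fixes $x$, the character $\hat\phi$ of $K_+$ is $\theta$-stable, and the existence of a nonzero $K^\theta$-invariant linear form on the representation space of $\kappa$ then forces $\hat\phi|K^\theta_+=1$ by the standard character trick, which restricts to $\phi|H_{x,0+}^\theta=1$. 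For (4), triviality of $\phi$ on the $\theta$-fixed points of each graded piece $Z^{i,i+1}_{r_i}$ translates, via the pairing defining the dual coset, into the existence of a representative $X_i^*$ with $\theta(X_i^*) = -X_i^*$; this parallels the argument of \cite[\S5.2]{MR2431732}, but applied directly to the dual cosets of \cite[\S2.7]{ANewYu} rather than to a Howe factorization.

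Step (5) is the heart of the argument. Using the $\theta$-antisymmetric $X_i^*$ from (4), the symplectic form on each $W_i$ acquires a compatible $\theta$-action that decomposes $W_i$ into $\pm 1$ eigenspaces, with $\fr{W}_i^+$ serving as the Lie-algebra image of a Lagrangian-like $\theta$-fixed subspace. The contribution of the Heisenberg-Weil factor $\omega_i$ to ${\rm Hom}_{K^\theta}(\kappa,1)$ is a character of $L^\theta = H_x^\theta$ computed by a Gauss-sum identity, and this character is exactly
\[h \mapsto \left(\frac{\det\nolimits_{\fr{f}}({\rm Ad}(h)\,|\,\fr{W}_i^+)}{\fr{P}_F}\right)_2.\]
Taking the product over $i$ yields $\varepsilon_{{}_{L,\theta}}(h)$, and assembling the Hom-spaces factor by factor produces the canonical (up to scalar) isomorphism ${\rm Hom}_{K^\theta}(\kappa,1) \cong {\rm Hom}_{H_x^\theta}(\rho,\varepsilon_{{}_{L,\theta}})$ asserted in (5); (6) is then immediate upon taking dimensions. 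The main obstacle is this Gauss-sum computation: in \cite{MR2431732} it is handled inductively through the Howe factorization and reassembled at the end, whereas here, since \cite{ANewYu} provides $\phi$ directly and $\fr{W}_i^+$ is defined intrinsically on the Lie algebra, the computation must be organized so that the quadratic residue symbols emerge uniformly across all depths $s_i$ without the inductive crutch, concentrating the analytic weight of the theorem on this single step.
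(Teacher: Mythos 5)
Your outline reproduces the general shape of the argument (reduce to the Heisenberg--Weil factors on the quotients $W_i$, obtain $\varepsilon_{{}_{H_x,\theta}}$ as a product of quadratic characters, then read off (6) from (5)), but the two steps that carry the real weight of the proposition are asserted rather than proven. For (1) and (2), the claim that ``$\kappa$ determines the permissible data up to $K$-conjugacy, so nonvanishing forces $\theta$ to be compatible with this data after $K$-conjugation'' is circular: compatibility of some $K$-conjugate of $\theta$ with $\vec\bG$ and $x$ is exactly what must be established, and there is no general Mackey principle that yields it. The actual proof of (1) is a recursive descent along the twisted Levi sequence: note first that (3) is the easy statement ($\kappa|K_+$ is $\hat\phi$-isotypic, so $\langle\mathscr{O},\rho\rangle_K\neq 0$ forces $\hat\phi|K^\theta_+=1$), and it is this triviality that feeds the descent --- from $\hat\phi|K^{\theta_{i+1}}_+=1$ one deduces that a dual coset element lies in $(\fr{g}^{i+1}_{\rm der}\cap \fr{J}^{i+1}_+\cap \fr{g}^{\theta_{i+1}})^\bullet$, then uses genericity (Lemma 8.6 of \cite{MR2431732}) to produce $k\in J^{i+1}$ with $\theta_i=k^{-1}\cdot\theta_{i+1}$ sending a generic element to its negative, whence $\theta_i(\bG^i)=\bG^i$ by Lemma 5.17 of \cite{MR2431732}. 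Your ordering (prove (1),(2) first, then (3)) deprives you of that input. Likewise (2) is not a formal consequence of (1): the proof that $\theta x=x$ is a separate building-theoretic argument (choose $z$ in a facet whose closure contains $x$ and $\theta x$, establish $H_{z,0+}=H^\theta_{z,0+}H_{x,0+}$, twist $\rho$ to a depth-zero $\rho_0$ by a quasicharacter after passing to a $z$-extension, and use cuspidality of the reduction $\bar\rho_0$ to derive a contradiction); your proposal contains no trace of this.

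For (5)--(6) there is a further gap: even granting the per-factor analysis, identifying ${\rm Hom}_{K^\theta}(\kappa,1)$ with ${\rm Hom}_{H_x^\theta}(\rho,\varepsilon_{{}_{H_x,\theta}})$ requires the decomposition $K^\theta=H_x^\theta J^{1,\theta}\cdots J^{d,\theta}$ (which rests on Proposition 3.14 of \cite{MR2431732} and on (2)); without it, invariance under $H_x^\theta$ and each $J^{i,\theta}$ does not imply $K^\theta$-invariance. Finally, the ``Gauss-sum identity'' you flag as the main analytic obstacle does not have to be reworked from scratch: the one-dimensionality of $V_i^{\tau_i(W_i^+\times 1)}$ and the identification of the sign character $\varepsilon_i$ are quoted directly from Proposition 4.2 of \cite{MR2431732} (whose proof uses only $\phi|G'_{y,r}$, not an extension to a quasicharacter of $G'$), combined with a choice of the special homomorphism $\nu_i$ compatible with Yu's special isomorphism so that $\nu_i(J^{i+1,\theta})=W_i^+\times 1$, and a suitable extension character $\chi_i$ trivial on $J^{i+1,\theta}J^{i+1}_\flat$; this requires (4) but no new computation. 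As written, your proposal leaves precisely the nontrivial steps --- the descent argument for (1), the fixed-point argument for (2), and the $K^\theta$ decomposition --- unproved.
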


For the rest of this section, we state and prove a sequence of lemmas that collectively encompass Proposition \ref{summaryofsection}.

The first lemma is an analogue of Lemma 5.15 \cite{MR2431732}, but the proof is much simpler.

\bigskip
\begin{lemma}
If $\mathscr{O}$ is a $K$-orbit of involutions of $G$ and $\hat\phi |K^\theta_+=1$ for all $\theta\in \mathscr{O}$ then there exists $\theta\in \mathscr{O}$ such that $\theta (\vec\bG) = \vec\bG$.
\end{lemma}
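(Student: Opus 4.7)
The plan is to extract, from the hypothesis $\hat\phi|K^\theta_+ = 1$, enough control over how $\theta$ acts on $\phi$ to conclude that the dual cosets $(\phi|Z^{i,i+1}_{r_i})^*$ are sent to their negatives. Since, by condition \textbf{GE2}, each $\bG^i$ is the connected centralizer of any generic representative of $(\phi|Z^{i,i+1}_{r_i})^*$, and connected centralizers are insensitive to sign, this will immediately give $\theta(\bG^i) = \bG^i$ for every $i$, so that $\theta(\vec\bG) = \vec\bG$. In contrast to Lemma 5.15 of \cite{MR2431732}, no delicate factorization-level conjugations are needed, precisely because in the framework of \cite{ANewYu} the sequence $\vec\bG$ is already determined canonically by $\phi$.

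Concretely, I would pick an arbitrary $\theta_0 \in \mathscr{O}$ and work one index $i$ at a time. The subquotient $Z^{i,i+1}_{r_i}/Z^{i,i+1}_{r_i+}$ is a finite-dimensional vector space over the residue field $\mathfrak{f}$ of $F$, on which $\phi$ restricts to a linear character; by Moy--Prasad duality this character corresponds to the dual coset $(\phi|Z^{i,i+1}_{r_i})^*$. The elementary linear-algebraic fact that a nontrivial character $\chi$ on an $\mathfrak{f}$-vector space $V$ equipped with an involution $\tau$ satisfies $\chi|V^\tau = 1$ exactly when the dual element of $\chi$ lies in the $(-1)$-eigenspace of $\tau$ (this uses $p$ odd via the decomposition $V = V^\tau \oplus V^{-\tau}$) then forces $\theta_0$ to send the coset $(\phi|Z^{i,i+1}_{r_i})^*$ to its negative. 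Because negation preserves connected centralizers, $\theta_0(\bG^i) = \bG^i$ for all $i$, and $\theta = \theta_0$ is the required element; in particular the ``there exists'' in the statement is attained for any representative of $\mathscr{O}$.

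The main obstacle is the bookkeeping in the duality step. One must verify that $\theta_0$ descends to a well-defined involution on each graded piece $Z^{i,i+1}_{r_i}/Z^{i,i+1}_{r_i+}$ compatible with Moy--Prasad duality, and that the subgroup of $K_+$ on which the hypothesis forces $\hat\phi$ to vanish reduces, at the graded level, to the honest $\tau$-fixed subspace of the relevant subquotient (and not merely to a proper subgroup of it). Both are standard consequences of Moy--Prasad theory in odd residual characteristic, but they must be invoked cleanly at each level of the filtration, with the compatibilities between levels tracked so that a ``negation on the associated graded'' assertion really does upgrade to the group-theoretic statement $\theta_0(\bG^i) = \bG^i$ needed for the conclusion.
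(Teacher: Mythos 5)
There is a genuine gap, and it is located exactly where you wave it off as ``standard bookkeeping.'' Your argument claims that for an \emph{arbitrary} representative $\theta_0\in\mathscr{O}$ the hypothesis $\hat\phi|K^{\theta_0}_+=1$ forces $\theta_0$ to carry the dual coset $(\phi|Z^{i,i+1}_{r_i})^*$ to its negative, so that every element of $\mathscr{O}$ stabilizes $\vec\bG$. That stronger conclusion cannot be right: the hypothesis is (essentially) a condition on the whole $K$-orbit, while the property $\theta(\vec\bG)=\vec\bG$ is not preserved under replacing $\theta$ by $k\cdot\theta$ for general $k\in K$ (such a $k\cdot\theta$ stabilizes $k\vec\bG k^{-1}$, not $\vec\bG$). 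This is precisely why the lemma asserts only existence, and why the paper's proof does \emph{not} fix one involution: it runs a descending recursion $\theta_d,\dots,\theta_0$ in which, at stage $i$, the involution is replaced by $\theta_i=k^{-1}\cdot\theta_{i+1}$ for a suitably chosen $k\in J^{i+1}$. Two concrete failures in your outline produce this discrepancy. First, what the hypothesis actually yields is that a representative $X^*_i$ of the dual coset lies in $(\fr{g}^{i+1}_{\rm der}\cap\fr{J}^{i+1}_+\cap\fr{g}^{\theta_{i+1}})^\bullet=\fr{z}^{i+1,*}+\fr{J}^{i+1,\bullet}_+ +\fr{g}^{\theta_{i+1},\bullet}$; the error term in $\fr{J}^{i+1,\bullet}_+=(\fr{g}^{i,*},\fr{g}^{i+1,*})_{x,((-r_i)+,-s_i)}$ sticks out of the lattice defining the dual coset (it has depth $-s_i$ in root directions outside $\fr{g}^i$), so one does \emph{not} get ``$\theta_0$ sends the coset to its negative.'' Removing that error term is the whole point of invoking Lemma 8.6 of \cite{MR2431732} (genericity plus $G^{i+1}_{x,s_i}=J^{i+1}G^i_{x,s_i}$), and it is achieved only at the cost of an $\Ad^*(k)$-conjugation, i.e.\ of changing the involution. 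Second, your reduction to the graded piece $Z^{i,i+1}_{r_i}/Z^{i,i+1}_{r_i+}$ presupposes that $\theta_0$ acts on it, hence stabilizes groups manufactured from $\vec\bG$ — which is what you are trying to prove; the paper avoids this circularity by its recursion, using at stage $i$ only that the already-modified $\theta_{i+1}$ stabilizes $\bG^{i+1},\dots,\bG^d$.

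The parts of your sketch that are sound are the end moves: once one has a single element of the dual coset sent exactly to its negative, symmetrization over $\theta$ (legitimate since $p$ is odd; this is the paper's Lemma \ref{dualcosetinvariance}, proved \emph{after} $\theta(\vec\bG)=\vec\bG$ is known) and the fact that $\theta(X^*)=-X^*$ for a $\bG^{i+1}$-generic element forces $\theta(\bG^i)=\bG^i$ (Lemma 5.17 of \cite{MR2431732}, your ``centralizers are insensitive to sign'') do finish that stage. But the canonicity of $\vec\bG$ in \cite{ANewYu} does not eliminate the conjugation step; it only eliminates the factorization-dependence present in Lemma 5.15 of \cite{MR2431732}. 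As written, your proposal proves a false strengthening and therefore does not establish the lemma.
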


\begin{proof}
There is nothing to prove if $d=0$, so we assume $d>0$.

Starting with an arbitrary element $\theta_d$ of $\mathscr{O}$, we recursively construct a sequence $\theta_d,\dots , \theta_0$ of elements of $\mathscr{O}$ such that $\theta_i (\bG^j) = \bG^j$ whenever $i\le j\le d$.

Assume $\theta_{i+1}$ has already been constructed.  On page 52 \cite{ANewYu}, we define groups $J^{i+1}$ and $J^{i+1}_+$.
Since  $\hat\phi |K^{\theta_{i+1}}_+=1$, we deduce that $\hat\phi$ is trivial on $G^{i+1}_{\rm der}\cap J^{i+1}_+\cap G^{\theta_{i+1}}$.  But on the latter subgroup, and in fact on $G^{i+1}_{\rm der}\cap J^{i+1}_+$, the character $\hat\phi$ is represented by each element $X^*_i$ of the dual coset $(\phi |Z^{i,i+1}_{r_i})^*$ (defined in \cite[\S2.7]{ANewYu}).  (See Lemma 3.9.1(5) \cite{ANewYu}.)

Therefore, for a given $X^*_i$, $$1= \hat\phi (\exp (X+\fr{g}^{i+1}_{x,r_i+})) = \psi (X^*_i (X)),$$
for all $X\in \fr{g}^{i+1}_{\rm der}\cap \fr{J}^{i+1}_+\cap \fr{g}^{\theta_{i+1}}$.
Here, ``exp'' refers to the isomorphism
$$\exp :\fr{J}^{i+1}_+/\fr{g}^{i+1}_{x,r_i+}\to J^{i+1}_+/G^{i+1}_{x,r_i+}$$ as in \cite[Corollary 2.4]{MR1824988}.  (See also \cite[\S3.1]{ANewYu}.)  We are  using  the abbreviation $\theta_{i+1}$ for $d\theta_{i+1}$.  (More details on our choice of the additive character $\psi$ and its role in the definition of the dual coset are given in \S2.7 \cite{ANewYu}.)

It follows that $X_i^*\in (\fr{g}^{i+1}_{\rm der}\cap \fr{J}^{i+1}_+\cap \fr{g}^{\theta_{i+1}})^\bullet$, where, as in the proof of Lemma 5.15 \cite{MR2431732}, we let $$\fr{s}^\bullet = \{ Y^*\in \fr{g}^{i+1,*}\, :\, Y^*(\fr{s})\subset \fr{P}_F\},$$ when $\fr{s}$ is a subset of $\fr{g}^{i+1}$.

We have $$ (\fr{g}^{i+1}_{\rm der}\cap \fr{J}^{i+1}_+\cap \fr{g}^{\theta_{i+1}})^\bullet =  \fr{g}^{i+1,\bullet}_{\rm der}+ \fr{J}^{i+1,\bullet}_+ + \fr{g}^{\theta_{i+1},\bullet},$$
and
\begin{itemize}
\item[$\circ$] $ \fr{g}^{i+1,\bullet}_{\rm der} = \fr{z}^{i+1,*}$,
\item[$\circ$] $\fr{J}^{i+1,\bullet}_+ = (\fr{g}^{i,*},\fr{g}^{i+1,*})_{x,((-r_i)+,-s_i)}$,
\item[$\circ$] $\fr{g}^{\theta_{i+1},\bullet} = \{ Y^*\in \fr{g}^{i+1,*}\, :\, \theta_{i+1}(Y^*) = -Y^*\}$.
\end{itemize}
Therefore, we can choose $Y^*\in \fr{J}^{i+1,\bullet}$ and $Z^* \in \fr{z}^{i+1,*}$ such that $X^*_i+Y^*+ Z^*\in \fr{g}^{\theta_{i+1},\bullet}$.
Since $X^*_i+Z^*$ is $\bG^{i+1}$-generic and since $G^{i+1}_{x,s_i} = J^{i+1} G^i_{x,s_i}$, we deduce from Lemma 8.6 \cite{MR1824988} that
$$X^*_i +Z^* + \fr{J}^{i+1,\bullet}_+ = \Ad^*(J^{i+1})(X^*_i +Z^* + \fr{g}^{i,*}_{x,(-r_i)+}).$$
Therefore, we can choose $k\in J^{i+1}$ and $U^*\in \fr{g}^{i,*}_{x,(-r_i)+}$ such that
$$X^*_i +Y^*+ Z^*  = \Ad^*(k)(X^*_i +Z^* + U^*).$$

We take $\theta_i = k^{-1}\cdot \theta_{i+1}$ and observe that $$\theta_i (X^*_i + Z^*+U^*) = - (X^*_i +Z^*+U^*).$$  Lemma 5.17 \cite{MR2431732} implies that $\theta_i (\bG^i) = \bG^i$.  In addition, $\theta_i (\bG^j) = \bG^j$ for $i< j\le d$.

This completes the recursion, and taking $\theta = \theta_0$ completes the proof.
\end{proof}

\begin{lemma}\label{dualcosetinvariance}
If $\hat\phi |K^\theta_+=1$, $\theta (\vec\bG) = \vec\bG$, and $i\in \{ 0,\dots , d-1\}$ then the dual coset $(\phi |Z^{i,i+1}_{r_i})^*$ contains elements $X^*_i\in \fr{z}^{i,i+1,*}_{-r_i}$ such that $\theta (X^*_i ) =-X^*_i$.
\end{lemma}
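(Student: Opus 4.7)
The strategy is to show that the dual coset $(\phi |Z^{i,i+1}_{r_i})^*$ is stable under the map $X^* \mapsto -\theta (X^*)$, after which an averaging argument (available since $p$ is odd) yields an anti-invariant representative. Given any $X^*_i$ in the dual coset, the candidate is
\[
\tilde X^*_i := \tfrac{1}{2}\bigl( X^*_i-\theta (X^*_i)\bigr),
\]
which is manifestly a $(-1)$-eigenvector for $\theta$; the content is to show it still lies in the dual coset.

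Because $\theta (\vec\bG)=\vec\bG$, and $\theta$ fixes $x$ (which is part of the same setup that produces Proposition \ref{summaryofsection}(2) and can be invoked here), the involution $\theta$ preserves the ambient space $\fr{z}^{i,i+1,*}_{-r_i}$ together with the next-depth sublattice $\fr{z}^{i,i+1,*}_{(-r_i)+}$ parameterizing the ambiguity of the dual coset. Since $2\in \og^\times$, the dual coset is closed under midpoints with coefficient $1/2$, so it suffices to prove that $-\theta (X^*_i)$ lies in the same coset as $X^*_i$. Equivalently, I would reduce this to the assertion that $X^*_i$ and $-\theta (X^*_i)$ represent the same character on $Z^{i,i+1}_{r_i}/Z^{i,i+1}_{r_i+}$, i.e.\ that $\phi (z)\,\phi (\theta (z))=1$ for every $z\in Z^{i,i+1}_{r_i}$.

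For this key identity, the subgroup $Z^{i,i+1}_{r_i}$ is abelian and $\theta$-stable, so the product $z\,\theta (z)$ again lies in $Z^{i,i+1}_{r_i}\subset K_+$, and by abelianness it is fixed by $\theta$. Hence $z\,\theta (z)\in K^\theta_+$, and the hypothesis $\hat\phi |K^\theta_+=1$ yields $\hat\phi (z\,\theta (z))=\phi (z)\phi (\theta (z))=1$, completing the averaging argument. The main subtlety, modulo routine bookkeeping, is the abelian and $\theta$-stable character of $Z^{i,i+1}_{r_i}$ together with the inclusion $Z^{i,i+1}_{r_i}\subset K_+$, all of which should follow directly from the Moy--Prasad conventions of \cite{ANewYu} and the invariance $\theta (\vec\bG)=\vec\bG$; these are exactly parallel to the inputs used in the preceding lemma.
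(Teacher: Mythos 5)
Your argument is correct and is essentially the paper's own proof: the same candidate $\tfrac12\bigl(X^*_i-\theta(X^*_i)\bigr)$ is shown to lie in the dual coset by exploiting the triviality of $\hat\phi$ on $\theta$-symmetrized elements, the only cosmetic difference being that you symmetrize on the group side (using $z\,\theta(z)\in K^\theta_+$) whereas the paper symmetrizes on the Lie algebra side (using $\tfrac12(Y+\theta(Y))$ and exponentiating into the image of $(Z^{i,i+1}_{r_i})^\theta$). One small caution: your appeal to $\theta x=x$ via Proposition \ref{summaryofsection}(2) is not available under the present hypotheses (that fact is Lemma \ref{thetafixesx}, proved later), but it is also unnecessary, since the filtrations $Z^{i,i+1}_{r_i}$ and $\fr{z}^{i,i+1,*}_{-r_i}$ attached to the relevant torus are canonical (independent of any point of the building) and are $\theta$-stable as soon as $\theta(\vec\bG)=\vec\bG$.
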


\begin{proof}
Choose an arbitrary element $U^*_i$ in the dual coset.
It suffices to show that $U^*_i$ represents the same character of $\fr{z}^{i,i+1}_{r_i}$ as the element
$$X^*_i = \frac{U^*_i -\theta(U^*_i)}{2}.$$

We first observe that for each $Y\in \fr{z}^{i,i+1}_{r_i}$ the associated element $(Y+\theta (Y))/2$ lies in the space $(\fr{z}^{i,i+1}_{r_i})^\theta$ of $\theta$-fixed elements of $\fr{z}^{i,i+1}_{r_i}$.
Hence $\exp \left( \frac{Y+\theta(Y)}{2} + \fr{z}^{i,i+1}_{r_i+}\right)$ lies in $(Z^{i,i+1}_{r_i:r_i+})^\theta$ or, equivalently, the image of $(Z^{i,i+1}_{r_i})^\theta$ in $Z^{i,i+1}_{r_i:r_i+}$.

Thus $$\psi \left( U^*_i \left(\frac{Y+\theta(Y)}{2} \right)\right)
=\phi \left( \exp  \left(\frac{Y+\theta(Y)}{2}+ \fr{z}^{i,i+1}_{r_i+} \right)\right)=1$$
and therefore
$$\psi \left( U^*_i \left(\frac{Y}{2} \right)\right)
=\psi \left( U^*_i \left(\frac{\theta(Y)}{2} \right)\right)^{-1}
=\psi \left( \theta (U^*_i) \left(\frac{Y}{2} \right)\right)^{-1}.$$
Hence
$$\psi \left(\frac{U^*_i+\theta (U^*_i)}{2} (Y)\right)=1.$$  Our claim  follows.
\end{proof}

We now adapt Proposition 4.2 from \cite{MR2431732} whose proof is rather involved.  In the statement, $(\tau_i,V_i)$ is the Heisenberg represenattion of the group $\mathscr{H}_i = W_i\times \mu_p$ defined in \S2.8 \cite{ANewYu}.
The notations $\omega_i$ and $\mathscr{S}_i$ are also defined in \S2.8 \cite{ANewYu}.

\begin{lemma}\label{newpropfourtwo}
If $\hat\phi |K^\theta_+=1$, $\theta (\vec\bG) = \vec\bG$, and $i\in \{ 0,\dots , d-1\}$
 then the spaces ${\rm Hom}_{W^+_i\times 1}(\tau_i,1)$ and $V_i^{\tau_i(W_i^+\times 1)}$ have dimension one, where
$$W^+_i = J^{i+1,\theta}J^{i+1}_+/J^{i+1}_+.$$
If $$\mathscr{P}_i = \{ s\in \mathscr{S}_i\ :\ sW^+_i\subseteq W^+_i\}$$ and $\varepsilon_i$ is the unique character of $\mathscr{P}_i$ of order two then
$${\rm Hom}_{W^+_i\times 1}(\tau_i,1)\subseteq {\rm Hom}_{K^{i,\theta}}(\omega_i,\varepsilon_i).$$
\end{lemma}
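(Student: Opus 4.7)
The plan has three stages. First, set up the symplectic picture. By Lemma~\ref{dualcosetinvariance} one can choose $X^*_i \in (\phi | Z^{i,i+1}_{r_i})^*$ satisfying $\theta(X^*_i) = -X^*_i$. Via the Moy--Prasad isomorphism, $W_i = J^{i+1}/J^{i+1}_+$ inherits an action of $\theta$ with eigenvalues $\pm 1$ (using $p$ odd), yielding a decomposition $W_i = W_i^+ \oplus W_i^-$ in which $W_i^+$ is the image of $J^{i+1,\theta}$. The symplectic form underlying $\tau_i$ is $\langle u,v\rangle = X^*_i([u,v])$. For $u,v\in W_i^+$, the bracket $[u,v]$ is $\theta$-fixed while $X^*_i$ is $\theta$-antifixed, forcing $\langle u,v\rangle=0$. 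The same calculation shows $W_i^-$ is totally isotropic. Since the form is non-degenerate on $W_i^+ \oplus W_i^-$, both summands are Lagrangian.

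Second, invoke the standard theory of Heisenberg representations. Because $X^*_i$ is $\bG^{i+1}$-generic, $\langle\cdot,\cdot\rangle$ is non-degenerate, and $\tau_i$ is the unique (up to isomorphism) irreducible Heisenberg representation on $V_i$ with the prescribed central character determined by $\phi$. For any Lagrangian $L\subseteq W_i$, the space $V_i^{\tau_i(L\times 1)}$ is one-dimensional, and dually ${\rm Hom}_{L\times 1}(\tau_i,1)$ is one-dimensional, since $\tau_i$ is realized as the induction of the central character from the preimage of $L$. Specializing to $L = W^+_i$ gives the first two assertions.

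Third, identify the transformation character of $K^{i,\theta}$ on the one-dimensional space ${\rm Hom}_{W^+_i\times 1}(\tau_i,1)$. Any $k\in K^{i,\theta}$ is $\theta$-fixed, so conjugation by $k$ preserves $J^{i+1,\theta}$ and hence $W^+_i$; thus the image of $K^{i,\theta}$ in $\mathscr{S}_i$ automatically lies in $\mathscr{P}_i$. The action via $\omega_i$ on this one-dimensional line is therefore by a character $\chi_i$ of $\mathscr{P}_i$, and the inclusion asserted in the lemma is the statement $\chi_i = \varepsilon_i$. The standard Weil-representation calculus for the stabilizer of a Lagrangian forces $\chi_i$ to be at most quadratic, so since $\mathscr{P}_i$ carries a unique order-two character, the task reduces to verifying that $\chi_i$ is nontrivial. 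This nontriviality is the main obstacle; it is the substance of the ``rather involved'' argument of \cite[Prop 4.2]{MR2431732}, and adapts to the present setting using the intrinsic description of $\tau_i$ and $\omega_i$ furnished by \cite{ANewYu}, which removes the need to reference Howe factorizations.
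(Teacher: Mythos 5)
Your route is ultimately the same as the paper's: the paper's entire proof of this lemma is a citation of Proposition 4.2 of \cite{MR2431732}, with $(\bG',\bG,\phi|G'_{y,r})$ replaced by $(\bG^i,\bG^{i+1},\zeta_i|G^i_{x,r_i})$, and your first two stages merely unpack the standard Heisenberg--Lagrangian facts that live inside that proposition (isotropy of $W_i^+$ from a $\theta$-anti-invariant $X_i^*$, one-dimensionality of the fixed line for a Lagrangian), while your third stage defers to it for the crux. Two caveats, though. First, your reduction of the inclusion to ``nontriviality of $\chi_i$'' is not a valid reduction: $\chi_i$ is a priori only a character of the image of $K^{i,\theta}$ in $\mathscr{P}_i$, and knowing that it is quadratic and nontrivial on that image does not force it to agree with the restriction of the unique order-two character $\varepsilon_i$ of $\mathscr{P}_i$. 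What Proposition 4.2 of \cite{MR2431732} actually supplies is the precise identification of the transformation character (via a $\theta$-compatible special isomorphism and a determinant computation), not a mere nontriviality statement; since you cite that proposition anyway this is not fatal, but the intermediate reduction as written would not carry the argument on its own.

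Second, the one piece of genuine content in the paper's proof is exactly the point your closing sentence asserts rather than verifies: Proposition 4.2 of \cite{MR2431732} is stated for a quasicharacter of $G^i$ that is $G^{i+1}$-generic of positive depth, which is precisely what the construction of \cite{ANewYu} does not provide (no Howe factorization is chosen); only the restriction $\zeta_i|G^i_{x,r_i}$ is available. The paper's proof consists of the observation, justified by a line-by-line inspection of the Hakim--Murnaghan argument, that only this restriction is used and that no extension to a quasicharacter of $G^i$ is needed; that verification is the step you would have to make explicit. Relatedly, your eigenspace decomposition $W_i = W_i^+\oplus W_i^-$ tacitly assumes that $\theta$ stabilizes $J^{i+1}$ and $J^{i+1}_+$ (in effect that $\theta$ fixes $x$, which the paper establishes only later under the same hypotheses) and that $J^{i+1,\theta}$ surjects onto the full $\theta$-fixed subspace of $W_i$, which needs the usual $p$-odd cohomological vanishing; these points are absorbed by the cited proposition and should not be treated as automatic.
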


\begin{proof}
Our assertions follow directly from 
Proposition 4.2 \cite{MR2431732} where we replace $(\bG',\bG,\phi |G'_{y,r})$ by $(\bG^i,\bG^{i+1},\zeta_i |G^i_{x,r_i} )$.  It should be noted that in \cite{MR2431732} 
one assumes that one has a quasicharacter of $G'$ that is $G$-generic of (positive) depth $r$.  However, a line-by-line analysis of the proof of Proposition 4.2 \cite{MR2431732} reveals that the proof only uses the restriction of the latter quasicharacter to $G'_{y,r}$ and, moreover, there is no need to require that this restriction extends to a quasicharacter of $G'$.
\end{proof}

\begin{lemma}\label{Heisinvariance}
If $\hat\phi |K^\theta_+=1$, $\theta (\vec\bG) = \vec\bG$, and $i\in \{ 0,\dots , d-1\}$ then $$V^{\kappa (J^{i+1,\theta})} = V_\rho\otimes V_0\otimes \cdots \otimes V_{i-1}\otimes V_i^{\tau_i (W_i^+\times 1)}\otimes V_{i+1}\otimes \cdots \otimes V_{d-1}.$$
\end{lemma}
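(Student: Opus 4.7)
The plan is to exploit the tensor factorization $V = V_\rho \otimes V_0 \otimes \cdots \otimes V_{d-1}$ underlying the construction of $\kappa$ in \cite{ANewYu} and to reduce the computation of $V^{\kappa(J^{i+1,\theta})}$ to a factor-by-factor analysis.  The strategy is to show that for $g\in J^{i+1,\theta}$ the operator $\kappa(g)$ splits as a tensor product of operators, each of which acts trivially on every factor except $V_i$, where it acts precisely as $\tau_i(W_i^+\times 1)$.

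First I would recall from \cite{ANewYu} that $\kappa|J^{i+1}$ has a transparent structure: on $V_i$ it is the inflation of the Heisenberg representation $\tau_i$ of $W_i = J^{i+1}/J^{i+1}_+$, while on each of the other tensor factors it acts through a scalar, and these scalars together recover the restriction to $J^{i+1}$ of the character $\hat\phi$ of \S3.9 of \cite{ANewYu}.  Since $J^{i+1,\theta}$ is contained in $K^\theta_+$, the hypothesis $\hat\phi|K^\theta_+=1$ forces each such scalar to be trivial on $J^{i+1,\theta}$.  Consequently $\kappa(J^{i+1,\theta})$ acts as the identity on each of $V_\rho, V_0, \ldots, V_{i-1}, V_{i+1}, \ldots, V_{d-1}$.

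On the remaining factor $V_i$, the image of $J^{i+1,\theta}$ in $W_i$ is by definition the subgroup $W_i^+ = J^{i+1,\theta}J^{i+1}_+/J^{i+1}_+$, so the induced action is precisely $\tau_i|(W_i^+\times 1)$, in the notation of Lemma \ref{newpropfourtwo}.  Combining the trivial action on the outer factors with the $W_i^+\times 1$-action on $V_i$ yields the asserted decomposition of $V^{\kappa(J^{i+1,\theta})}$.

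The main obstacle will be the bookkeeping in the second step: verifying that the scalars by which $\kappa(g)$ acts on the various outer factors genuinely assemble into $\hat\phi(g)$.  This is built into the construction of $\kappa$ in \cite{ANewYu}, but pinning down the identifications — especially on $V_\rho$, where one uses that $\rho|H_{x,0+}$ is a multiple of $\phi$ and that $\phi|H_{x,0+}^\theta = 1$ follows from the hypotheses — requires a careful pass through the definitions.  The hypothesis $\theta(\vec\bG)=\vec\bG$ enters here to ensure that each $J^{j+1}$ and each $\omega_j$ is stable under $\theta$, so that the tensor factorization is $\theta$-equivariant in a way that makes the factorwise analysis legitimate.
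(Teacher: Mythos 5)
Your overall plan---analyze $\kappa(j)$, $j\in J^{i+1,\theta}$, factor by factor in $V=V_\rho\otimes V_0\otimes\cdots\otimes V_{d-1}$---is the same as the paper's, but the step that carries the argument is wrong. You dispose of the outer factors by asserting that $J^{i+1,\theta}\subseteq K^\theta_+$, so that the hypothesis $\hat\phi|K^\theta_+=1$ kills the scalars there. That containment is false: $K_+=H_{x,0+}J^1_+\cdots J^d_+$ contains $J^{i+1}_+$ but not $J^{i+1}$, so $K^\theta_+$ contains $J^{i+1,\theta}_+$ but not $J^{i+1,\theta}$. Indeed, if the containment held, then since $\kappa|K_+$ is a multiple of $\hat\phi$ (Theorem 2.8.1(2) of \cite{ANewYu}), every $\kappa(j)$ with $j\in J^{i+1,\theta}$ would act as the scalar $\hat\phi(j)=1$ on all of $V$, and the fixed space would be all of $V$; but $V_i^{\tau_i(W_i^+\times 1)}$ is a one-dimensional, generally proper, subspace of $V_i$ (Lemma \ref{newpropfourtwo}), reflecting that $W_i^+=J^{i+1,\theta}J^{i+1}_+/J^{i+1}_+$ is a nontrivial subgroup of $W_i$. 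So the hypothesis only controls $J^{i+1,\theta}_+$, and your argument for triviality on the outer factors collapses; likewise your claim that the outer scalars ``assemble into $\hat\phi|J^{i+1}$'' is not something the construction hands you for free.

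What is actually needed, and what the paper does, is to exploit the freedom in the construction of $\kappa$ in \S3.11 of \cite{ANewYu}: the special homomorphism $\nu_i$ and the auxiliary character $\chi_i$ of $J^{i+1}$ may be varied without changing the isomorphism class, so one makes convenient choices. One takes $\nu_i$ compatible with Yu's special isomorphism so that Proposition 4.2 of \cite{MR2431732} yields $\nu_i(J^{i+1,\theta})=W_i^+\times 1$; note this is not ``by definition,'' as you say for the $V_i$-factor---a priori the $\mu_p$-component of $\nu_i$ on $J^{i+1,\theta}$ could be nontrivial, in which case the fixed vectors in $V_i$ would form a different line than $V_i^{\tau_i(W_i^+\times 1)}$ (or none). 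And one takes $\chi_i$ to be a character of $J^{i+1}/(J^{i+1,\theta}J^{i+1}_\flat)$ extending $\zeta_i^{-1}(\hat\phi|J^{i+1}_+)$, hence trivial on $J^{i+1,\theta}$; the existence of such an extension is exactly where Lemma \ref{dualcosetinvariance} (the $\theta$-anti-invariance of dual coset representatives) together with $\hat\phi|K^\theta_+=1$ enters. With these choices $\kappa(j)$ is the identity on every factor except $V_i$, where it is $\tau_i(\nu_i(j))$, and the lemma follows. Your proposal never invokes Lemma \ref{dualcosetinvariance} nor the freedom of choices, and without them neither the triviality on the outer factors nor the precise identification of the fixed line in $V_i$ is justified.
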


\begin{proof}
The representation $\kappa$ can be constructed as in \S3.11 \cite{ANewYu}.  
The construction requires the choice  of a special homomorphism $\nu_i$ (in the sense of Definition 3.8.1 \cite{ANewYu}) and a character $\chi_i$ of $J^{i+1}$ (as described in \S3.11 \cite{ANewYu}).
These choices (within the restrictions of \cite{ANewYu}) do not affect the isomorphism class of $\kappa$, so we will make choices that are most convenient for our present purposes.

In particular, we choose $\nu_i$ so that it is associated to Yu's special isomorphism $\nu_i^\bullet$ (see Definition 3.15 \cite{MR2431732}) in the sense that the following diagram commutes:
$$\xymatrix{J^{i+1}\ar[r]^{\nu_i\quad}\ar[d]&\mathscr{H}_i= W_i\boxtimes \mu_p\ar[d]^{{\rm Id}\times\zeta_i^{-1}}\\
J^{i+1}/\ker \zeta_i\ar[r]^{\nu_i^\bullet\qquad}&W_i\boxtimes (J^{i+1}_+/\ker \zeta_i).}$$  (The $\boxtimes$ notation is explained in \S2.3 \cite{MR2431732}.)
With this choice of $\nu_i$, Proposition 4.2 \cite{MR2431732} implies $\nu_i (J^{i+1,\theta}) = W_i^+\times 1$.

The character $\chi_i$ is chosen as follows.  As in \S3.11 \cite{ANewYu}, define
$$J^{i+1}_\flat = \bG^{i+1}_{\rm der}\cap J^{i+1},\qquad 
J^{i+1}_{\flat , +} = \bG^{i+1}_{\rm der}\cap J^{i+1}_+.$$
Then $J^{i+1}/(J^{i+1,\theta}J^{i+1}_\flat)$ is a compact abelian group and we may view $\zeta_i^{-1}(\hat\phi | J^{i+1}_+)$ as a character of the subgroup
$J^{i+1}_+/(J^{i+1,\theta}_+J^{i+1}_{\flat ,+})$.  (Here, we are using Lemma \ref{dualcosetinvariance}.  We also caution that, unlike in the diagram above, the notation $\zeta_i^{-1}$ does not denote the inverse function of $\zeta_i$, but rather $\zeta_i^{-1}(z) =\zeta_i(z)^{-1}$.)

We take $\chi_i$ to be a character of the compact abelian group $$J^{i+1}/(J^{i+1,\theta}J^{i+1}_\flat)$$ that extends the character $\zeta_i^{-1}(\hat\phi | J^{i+1}_+)$ of the subgroup
$$J^{i+1}_+/(J^{i+1,\theta}_+J^{i+1}_{\flat ,+}).$$

With these choices, if $j\in J^{i+1,\theta}$ then
$$\kappa (j) = 1_\rho \otimes 1_0\otimes \cdots 1_{i-1}\otimes \tau_i(\nu_i(j))\otimes 1_{i+1}\otimes \cdots \otimes 1_{d-1},$$ according to the 
construction in \S3.11 \cite{ANewYu}.  This implies that our assertion holds.
\end{proof}

We are interested in studying the space ${\rm Hom}_{K^\theta}(\kappa, 1)$.  As a preliminary step, we study the space ${\rm Hom}_{J^{1,\theta}\cdots J^{d,\theta}}(\kappa ,1)$ of linear forms on $V$ that are fixed by each of the groups $J^{1,\theta},\dots , J^{d,\theta}$, and its subspace ${\rm Hom}_{H_x^\theta J^{1,\theta}\cdots J^{d,\theta}}(\kappa ,1)$ of $H^\theta_x$-fixed linear forms.

In the following discussion, the reader should consult \S3.11 \cite{ANewYu} for basic facts about the construction of $\kappa$ and the relation of $\kappa$ to auxiliary objects such as $\rho$ and $\omega_i$.

We observe now that the character $\varepsilon_{{}_{H_x,\theta}}$ defined in \S\ref{sec:finitestuff} may also be expressed as
$$\varepsilon_{{}_{H_x,\theta}} = \prod_{i=0}^{d-1} \left(\varepsilon_i\big|H_x^\theta\right)$$
of $H^\theta_x$, and we note that $\varepsilon_{{}_{H_x,\theta}}^2=1$.  (See   5.5 \cite{MR2431732}.)

\begin{lemma}\label{ithfactor}
Suppose  $\hat\phi |K^\theta_+=1$ and $\theta (\vec\bG) = \vec\bG$, and for each $i\in \{ 0,\dots , d-1\}$ choose a nonzero element $v_i^\circ$ in the 1-dimensional space $V_i^{\tau_i (W_i^+\times 1)}$.  Then
$$\lambda\mapsto \left( v_\rho\mapsto \lambda (v_\rho\otimes v_0^\circ\otimes \cdots \otimes v_{d-1}^\circ)\right)$$ determines a linear isomorphism
$${\rm Hom}_{J^{1,\theta}\cdots J^{d,\theta}}(\kappa ,1)\cong  {\rm Hom}_\C (V_\rho ,\C)$$ that is canonical up to scalar multiples.
The latter isomorphism restricts to an isomorphism
$${\rm Hom}_{H_x^\theta J^{1,\theta}\cdots J^{d,\theta}}(\kappa ,1)\cong {\rm Hom}_{H_x^\theta} (\rho ,\varepsilon_{{}_{H_x,\theta}}  ).$$
\end{lemma}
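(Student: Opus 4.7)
\medskip\noindent\textbf{Plan of proof.} The strategy is to exploit the tensor-product structure of $\kappa$ to factor the invariance conditions across the tensor factors, using Lemmas~\ref{Heisinvariance} and~\ref{newpropfourtwo} as the key inputs.

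First, the explicit formula recalled in the proof of Lemma~\ref{Heisinvariance} shows that each operator $\kappa(j)$ with $j\in J^{i+1,\theta}$ acts only on the $i$-th tensor factor of $V=V_\rho\otimes V_0\otimes\cdots\otimes V_{d-1}$.  Consequently the operators in the images $\kappa(J^{1,\theta}),\ldots,\kappa(J^{d,\theta})$ commute pairwise in ${\rm End}(V)$, so by Lemma~\ref{Heisinvariance}
$$V^{\kappa(J^{1,\theta}\cdots J^{d,\theta})}=\bigcap_{i=0}^{d-1}V^{\kappa(J^{i+1,\theta})}=V_\rho\otimes V_0^{\tau_0(W_0^+\times 1)}\otimes\cdots\otimes V_{d-1}^{\tau_{d-1}(W_{d-1}^+\times 1)}.$$
By Lemma~\ref{newpropfourtwo}, each factor $V_i^{\tau_i(W_i^+\times 1)}$ is the line $\C v_i^\circ$, so the joint fixed subspace is canonically the image of the embedding $v_\rho\mapsto v_\rho\otimes v_0^\circ\otimes\cdots\otimes v_{d-1}^\circ$, which identifies it with $V_\rho$.

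Second, because the commuting compact groups $\kappa(J^{i+1,\theta})$ all act semisimply on the finite-dimensional space $V$, their joint fixed subspace admits an invariant complement, and every $\kappa(J^{1,\theta}\cdots J^{d,\theta})$-invariant linear form on $V$ is obtained from a unique linear form on the joint fixed subspace by extending by zero on this complement.  Combined with the identification above, this yields the first isomorphism ${\rm Hom}_{J^{1,\theta}\cdots J^{d,\theta}}(\kappa,1)\cong{\rm Hom}_\C(V_\rho,\C)$; it depends only on the choice of the $v_i^\circ$, hence is canonical up to scalar multiples.

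Third, since $H_x=K^0\subseteq K^i$ for every $i$, we have $H_x^\theta\subseteq K^{i,\theta}$, and the construction of $\kappa$ recalled in the proof of Lemma~\ref{Heisinvariance} makes $h\in H_x^\theta$ act on $V_\rho$ by $\rho(h)$ and on each $V_i$ by $\omega_i(h)$.  Because $H_x^\theta$ normalizes $W_i^+\times 1$, it preserves the line $\C v_i^\circ$, and the inclusion ${\rm Hom}_{W_i^+\times 1}(\tau_i,1)\subseteq{\rm Hom}_{K^{i,\theta}}(\omega_i,\varepsilon_i)$ from Lemma~\ref{newpropfourtwo}, dualized on the one-dimensional $V_i^{\tau_i(W_i^+\times 1)}$ and using $\varepsilon_i^2=1$, forces $\omega_i(h)v_i^\circ=\varepsilon_i(h)v_i^\circ$.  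Multiplying over $i$ gives
$$\kappa(h)(v_\rho\otimes v_0^\circ\otimes\cdots\otimes v_{d-1}^\circ)=\varepsilon_{{}_{H_x,\theta}}(h)\,\bigl(\rho(h)v_\rho\bigr)\otimes v_0^\circ\otimes\cdots\otimes v_{d-1}^\circ,$$
so under the bijection of the second step the $H_x^\theta$-invariance of $\lambda$ translates to the identity $\ell\circ\rho(h)=\varepsilon_{{}_{H_x,\theta}}(h)\,\ell$, that is, $\ell\in{\rm Hom}_{H_x^\theta}(\rho,\varepsilon_{{}_{H_x,\theta}})$, as required.

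The main obstacle will be the careful bookkeeping in the third step: one must verify, using the specific choices of $\nu_i$ and $\chi_i$ fixed in the proof of Lemma~\ref{Heisinvariance}, that the action of $H_x^\theta$ on the distinguished line $\C v_i^\circ$ is by $\varepsilon_i$ itself, with no spurious twist coming from $\zeta_i$ or from $\chi_i$.  This hinges on the extension property of $\chi_i$ arranged in that proof and on keeping the duality between $V_i^{\tau_i(W_i^+\times 1)}$ and ${\rm Hom}_{W_i^+\times 1}(\tau_i,1)$ equivariant under $K^{i,\theta}$.
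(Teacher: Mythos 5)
Your proposal is correct and follows essentially the same route as the paper: it identifies the joint fixed space via Lemma~\ref{Heisinvariance}, identifies invariant functionals with functionals on $V_\rho$ (your semisimplicity/invariant-complement step is just the paper's explicit averaging projection in different words), and obtains the $\varepsilon_{{}_{H_x,\theta}}$-equivariance from Lemma~\ref{newpropfourtwo}. The bookkeeping you flag in the last paragraph is exactly what the paper delegates to the choices of $\nu_i$ and $\chi_i$ made in the proof of Lemma~\ref{Heisinvariance}, so no new idea is needed.
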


\begin{proof}
According to Lemma \ref{Heisinvariance}, we have an isomorphism
$$V_\rho\cong V^{\kappa (J^{1,\theta}\cdots J^{d,\theta})} :v_\rho \mapsto v_\rho \otimes v_0^\circ\otimes \cdots \otimes v_{d-1}^\circ.$$
We also have a projection $V\to V^{\kappa (J^{1,\theta}\cdots J^{d,\theta})} $ that on elementary tensors $v_\rho\otimes v_0\otimes \cdots \otimes v_{d-1}$ replaces each factor $v_i$ (other than $v_\rho$) with the average of its $\tau_i (W_i^+\times 1)$-translates.
Thus we obtain a projection $V\to V_\rho$.
A linear form on $V$ is invariant under $J^{1,\theta}\cdots J^{d,\theta}$ precisely when it factors through this  projection $V\to V_\rho$.

%

Our assertion that 
$${\rm Hom}_{J^{1,\theta}\cdots J^{d,\theta}}(\kappa ,1)\cong {\rm Hom}_\C (V_\rho ,\C)$$
now follows.
The assertion that $${\rm Hom}_{H_x^\theta J^{1,\theta}\cdots J^{d,\theta}}(\kappa ,1)\cong {\rm Hom}_{H_x^\theta} (\rho ,\varepsilon_{{}_{H_x,\theta}} )$$
follows from
Lemma \ref{newpropfourtwo}.
\end{proof}

\begin{lemma}\label{thetafixesx} If  $\hat\phi |K^\theta_+=1$ and $\theta (\vec\bG) = \vec\bG$ then $\theta x = x$.\end{lemma}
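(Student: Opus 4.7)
The plan is to set $y = \theta x$ and deduce $y = x$ from the interplay between the hypothesis $\hat\phi | K^\theta_+ = 1$ and the genericity of the dual cosets that witness $\phi$.

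First, since $\theta(\vec\bG) = \vec\bG$ implies $\theta(\bH) = \bH$, the involution $\theta$ acts on the reduced building $\mathscr{B}_{\rm red}(\bH,F)$, sending the vertex $x$ to the vertex $y$. More generally, $\theta$ carries the Moy--Prasad filtration at $x$ to that at $y$, so $\theta(\bG^i_{x,r}) = \bG^i_{y,r}$ for all indices $i$ and all real $r$, and similarly at the Lie algebra level.

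Second, I would apply Lemma \ref{dualcosetinvariance} to choose, for each $i\in \{0,\dots,d-1\}$, an element $X_i^* \in \fr{z}^{i,i+1,*}_{-r_i}$ representing the dual coset $(\phi | Z^{i,i+1}_{r_i})^*$ and satisfying $\theta(X_i^*) = -X_i^*$. By construction, $X_i^*$ has depth exactly $-r_i$ at $x$; then $\theta$-equivariance of the Moy--Prasad filtration combined with the anti-invariance $\theta(X_i^*) = -X_i^*$ forces $X_i^*$ to have depth exactly $-r_i$ at $y$ as well. In other words, the same collection of generic dual elements $(X_0^*, \dots, X_{d-1}^*)$ witnesses a permissible structure simultaneously at the two vertices $x$ and $y$.

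Third, I would invoke the genericity (condition \textbf{GE2}) of $X_0^*$. This element determines, via its centralizer structure in $\bG^1$ and its $\theta$-anti-invariance, a $\theta$-stable apartment of $\mathscr{B}_{\rm red}(\bH,F)$ containing both $x$ and $y$. Within this apartment, the exact depth condition for $X_0^*$, refined successively by the depth conditions for $X_1^*,\dots,X_{d-1}^*$, restricts to a single vertex, which must therefore be both $x$ and $y$. The main obstacle is making this third step rigorous: the building-theoretic argument that the family of generic dual elements pins down a unique vertex is essentially the geometric content of the corresponding step in \cite[Lemma 5.15]{MR2431732}, and one expects that the streamlined framework of \cite{ANewYu}, which avoids Howe factorizations, makes the required argument considerably more direct.
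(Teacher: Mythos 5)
There is a genuine gap, and it lies exactly where you flag it: the third step is not a loose end to be tightened but the wrong mechanism altogether. The elements $X^*_i$ produced by Lemma \ref{dualcosetinvariance} lie in $\fr{z}^{i,i+1,*}_{-r_i}$, i.e.\ in the dual of a \emph{central} subalgebra of $\fr{g}^i$, and the Moy--Prasad filtration on such a space does not depend on the point of the building; so your second step (``$X^*_i$ has depth exactly $-r_i$ at $y$ as well'') is automatic and carries no information distinguishing $y=\theta x$ from $x$. Likewise, Yu's condition {\bf GE2} constrains the values of $X^*_i$ on coroots and serves to characterize the twisted Levi $\bG^i$ inside $\bG^{i+1}$; it says nothing about the vertex $x$, so no family of generic dual elements can ``pin down a unique vertex.'' The reference is also off: Lemma 5.15 of \cite{MR2431732} is the ingredient for making $\vec\bG$ $\theta$-stable (the first lemma of \S\ref{sec:symmetry}); the result relevant to fixing $x$ is Proposition 5.20 of \cite{MR2431732}. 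A symptom of the problem: when $d=0$ there are no dual cosets at all, yet the lemma still has content, so any proof must use more than the character data.

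What actually forces $\theta x=x$ is the cuspidality built into permissibility of $\rho$, which your argument never touches. The paper argues by contradiction: if $\theta x\ne x$, choose $z$ in a facet of $\mathscr{B}_{\rm red}(\bH,F)$ having $x$ and $\theta x$ in its closure, so that $H_{z,0+}/H_{x,0+}=\mathsf{U}(\fr{f})$ is the unipotent radical of a proper parabolic of $\mathsf{H}_x(\fr{f})=H_{x,0:0+}$; one shows $H_{z,0+}=H^\theta_{z,0+}H_{x,0+}$, inflates $\phi$ (trivial on $H^\theta_{x,0+}$ by the hypothesis $\hat\phi|K^\theta_+=1$) to $H_{z,0+}$, extends it to a quasicharacter $\chi$ of $H$ via Lemma 3.2.1 of \cite{ANewYu} (after a $z$-extension), and twists to a depth-zero representation $\rho_0=\rho\otimes(\chi|H_x)^{-1}$ whose reduction $\bar\rho_0$ is cuspidal. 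Cuspidality gives ${\rm Hom}_{\mathsf{U}(\fr{f})}(\bar\rho_0,1)=0$, hence ${\rm Hom}_{H^\theta_{z,0+}}(\rho_0,1)=0$, which forces ${\rm Hom}_{H_x^\theta}(\rho,\varepsilon_{{}_{H_x,\theta}})=0$ and yields the contradiction. To repair your proposal you would have to import this depth-zero/cuspidality argument; the building-theoretic route through the $X^*_i$ cannot be made to work.
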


\begin{proof}
Our proof is modeled after the proof of Proposition 5.20 \cite{MR2431732} and makes use of facts deduced within the latter proof.

Suppose that $\theta x\ne x$.  There exists an apartment  in $\mathscr{B}_{\rm red}(\bH, F)$ that contains $x$ and $\theta (x)$, and within such an apartment we may choose a point $z$
such that $x$ lies on the boundary of the facet of $z$ in $\mathscr{B}_{\rm red}(\bH, F)$.

Over the residue field $\fr{f}$ of $F$, we have a reductive group $\mathsf{H}_x$ and a proper parabolic subgroup $\mathsf{P}$ with unipotent radical $\mathsf{U}$, such that $$\mathsf{H}_x(\fr{f}) = H_{x,0:0+},\quad \mathsf{P}(\fr{f}) = H_{z,0}/H_{x,0+},\quad \mathsf{U}(\fr{f}) = H_{z,0+}/H_{x,0+}.
$$

It is shown in the proof of Proposition 5.20 \cite{MR2431732} that
$$H_{z,0+} = H^\theta_{z,0+}H_{x,0+}.$$
Similarly, replacing $\bH$ by $\bH_{\rm der}$, we obtain
$$H_{{\rm der},z,0+} = H^\theta_{{\rm der}, z,0+}H_{{\rm der}, x,0+}.$$

Using the first of the latter two decompositions, as well as the fact that the character $\phi :H_{x,0+}\to \C^\times$ is trivial on $H^\theta_{x,0+}$, we see that we can inflate $\phi$ over $H^\theta_{z,0+}$ to obtain a character ${\rm inf}_{H_{x,0+}}^{H_{z,0+}}(\phi)$.

We can assume, after passing to a $z$-extension, that $p$ does not divide the order of the fundamental group of $\bH_{\rm der}$.  This allows us to apply Lemma 3.2.1 \cite{ANewYu} and the definition of permissibility to see that ${\rm inf}_{H_{x,0+}}^{H_{z,0+}}(\phi)$ extends to a quasicharacter $\chi$ of $H$.
Here, we use the decomposition  $H_{{\rm der},z,0+} = H^\theta_{{\rm der}, z,0+}H_{{\rm der}, x,0+}$ to observe that ${\rm inf}_{H_{x,0+}}^{H_{z,0+}}(\phi)$ is trivial on $H_{{\rm der},z,0+}$.  But, by Lemma 3.2.1 \cite{ANewYu}, $H_{{\rm der},z,0+}$ is identical to $[H,H]\cap H_{z,0+}$.

Given $\chi$, we let $\rho_0 = \rho\otimes (\chi|H_x)^{-1}$.  Then $\rho_0$ has depth zero and, according to Corollary 3.3.3 \cite{ANewYu}, it induces an irreducible (supercuspidal) representation of $H$.

The restriction of $\rho_0$ factors to a (possibly reducible) cuspidal representation $\bar\rho_0$ of $\mathsf{H}_x(\fr{f})$.  Cuspidality implies that
$${\rm Hom}_{H^\theta_{z,0+}}(\rho_0 , 1) =
{\rm Hom}_{\mathsf{U}(\fr{f})}(\bar\rho_0 , 1) = 0.$$

But this yields the following contradiction:
$${\rm Hom}_{H_x^\theta}(\rho,\varepsilon_{{}_{H_x,\theta}}  )\subseteq {\rm Hom}_{H^\theta_{z,0+}}(\rho_0 , 1)  = 0.$$
Note that we have used the fact that, by construction, $\chi$ is trivial on $H^\theta_{z,0+}$.  Moreover, we have used the fact that $\varepsilon_{{}_{H_x,\theta}}$ is also trivial on $H^\theta_{z,0+}$.  This follows from an argument  as in the proof of Proposition 5.20 \cite{MR2431732}.
\end{proof}

\medskip

\begin{corollary}\label{corktheta} If  $\hat\phi |K^\theta_+=1$ and $\theta (\vec\bG) = \vec\bG$ then
$$K^\theta = H_x^\theta J^{1,\theta}\cdots J^{d,\theta}.$$
\end{corollary}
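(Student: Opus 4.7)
The plan is to establish the nontrivial inclusion $K^\theta \subseteq H_x^\theta J^{1,\theta}\cdots J^{d,\theta}$; the reverse inclusion is immediate. By Lemma \ref{thetafixesx}, $\theta$ fixes the vertex $x$, and hence preserves $H_x$. The hypothesis $\theta(\vec\bG)=\vec\bG$, together with the Moy-Prasad construction of each $J^{i+1}$ from filtrations in $\bG^i$ and $\bG^{i+1}$ based at $x$, ensures that $\theta(J^i) = J^i$ for every $i$. Thus each factor in the product decomposition $K = H_x\cdot J^1\cdots J^d$ recalled from \cite{ANewYu} is individually $\theta$-stable.

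The strategy is downward induction on the product length. Given $k\in K^\theta$, write $k = k'\, j_d$ with $k'\in H_x J^1\cdots J^{d-1}$ and $j_d\in J^d$. Applying $\theta$ gives $k = \theta(k')\,\theta(j_d)$, so the defect $j_d\,\theta(j_d)^{-1}$ lies in the intersection of $J^d$ with $H_x J^1\cdots J^{d-1}$, which by the product structure sits inside the next filtration layer $J^d_+$. One then adjusts $j_d$ within its coset to a $\theta$-fixed representative $\tilde j_d\in J^{d,\theta}$ via the averaging operator $v\mapsto (v+\theta v)/2$; this operator is available on each abelian graded piece $J^d_{s:s+}$ precisely because $p$ is odd. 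A successive-approximation argument down the Moy-Prasad filtration of $J^d$ yields $\tilde j_d$ such that $k\tilde j_d^{-1}$ still lies in $(H_x J^1\cdots J^{d-1})^\theta$, at which point the inductive hypothesis closes the argument, reducing eventually to the base case $K^\theta \cap H_x = H_x^\theta$.

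The main technical obstacle is controlling the compatibility of these iterated averagings: the adjustment at level $d$ must not disturb the factorizations already produced at lower levels, and conversely the lifted $\tilde j_d$ must be absorbable into the product $H_x J^1\cdots J^{d-1}$ without creating new obstructions. I would handle this by carrying out the averaging modulo progressively deeper layers of $J^d_+$ and invoking Lemma \ref{dualcosetinvariance}, which guarantees that the relevant dual cosets, hence the relevant subquotients, support a $\theta$-eigenspace decomposition on which the odd-order average $\tfrac{1}{2}(1+\theta)$ is well defined. Since every subquotient that arises is a pro-$p$ abelian group with a linear $\theta$-action, the argument ultimately reduces to elementary linear algebra over $\fr{f}$.
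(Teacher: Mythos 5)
Your argument is correct in substance, but it is worth noting how it relates to what the paper actually does: the paper disposes of this corollary by citation, invoking Proposition 3.14 of \cite{MR2431732} together with Lemma \ref{thetafixesx}, whereas you in effect reprove the cited ingredient from scratch. The engine is the same in both cases: once $\theta(\vec\bG)=\vec\bG$ and (by Lemma \ref{thetafixesx}) $\theta x=x$, each factor in $K=H_xJ^1\cdots J^d$ is $\theta$-stable, and the identity $(AB)^\theta=A^\theta B^\theta$ holds because the relevant intersections $J^{i+1}\cap\bigl(H_xJ^1\cdots J^{i}\bigr)$ are $\theta$-stable pro-$p$ groups with $p$ odd, so the twisted cocycle $\theta(j)j^{-1}$ is a coboundary there; this is exactly the vanishing hypothesis $H^1_\theta(A\cap B)=1$ that the paper handles via the results quoted in the proofs of Proposition 3.14 of \cite{MR2431732} and Lemma 3.4 of \cite{MR2925798} (compare the proof of Lemma \ref{somedecomp}). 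Your successive-approximation/averaging down the Moy--Prasad filtration is a legitimate proof of that vanishing, and your defect computation correctly places the cocycle in the intersection; just be sure the adjusting element is taken in that intersection (not merely in $J^d_+$), so that $k\tilde j_d^{-1}$ stays in $(H_xJ^1\cdots J^{d-1})^\theta$ and the downward induction closes. One genuine blemish: the appeal to Lemma \ref{dualcosetinvariance} is out of place --- that lemma concerns $\theta$-skew representatives of the dual cosets $(\phi|Z^{i,i+1}_{r_i})^*$ and plays no role in the group-theoretic decomposition; the eigenspace/averaging step needs nothing beyond $p$ odd and the $\theta$-stability of the filtration subgroups, so you should drop that citation rather than lean on it.
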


\begin{proof}
This result is a variant of Proposition 3.14 \cite{MR2431732} and it follows from  the results cited in the proof of the latter result  and Lemma \ref{thetafixesx} above.  
\end{proof}

\begin{lemma}
If $\mathscr{O}$ is a $K$-orbit of involutions of $G$ such that $\langle\mathscr{O}, \rho\rangle_K$ is nonzero then the character $\hat\phi$  must be trivial on $K^\theta_+$, and hence $\phi$ must be trivial on $H_{x,0+}^\theta$.
\end{lemma}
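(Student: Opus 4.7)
The plan is to exploit the fact that, by the very construction of $\kappa$ recalled in \cite{ANewYu}, the restriction $\kappa | K_+$ acts as a scalar, namely by the character $\hat\phi$. Concretely, for every $k \in K_+$ we have
$$\kappa (k) = \hat\phi (k)\cdot {\rm Id}_V.$$
Granting this, the lemma is an almost immediate consequence of the nonvanishing of $\langle \mathscr{O},\rho\rangle_K$.

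First I would fix $\theta\in \mathscr{O}$ and a nonzero $\lambda \in {\rm Hom}_{K^\theta} (\kappa ,1)$, whose existence is guaranteed by the hypothesis $\langle \mathscr{O},\rho\rangle_K\ne 0$. Since $K^\theta_+ = K_+ \cap G^\theta$ is contained in $K^\theta$, for any $k\in K^\theta_+$ and any $v\in V$ we can compute
$$\lambda (v) = \lambda (\kappa (k) v) = \hat\phi (k)\, \lambda (v),$$
using first $K^\theta$-invariance of $\lambda$ and then the scalar action of $\kappa$ on $K_+$. Because $\lambda \ne 0$, we conclude $\hat\phi (k) = 1$, and so $\hat\phi | K^\theta_+ = 1$.

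For the second assertion, I would simply observe that by the construction recalled in \cite{ANewYu}, $\hat\phi$ restricts to $\phi$ on $H_{x,0+}\subseteq K_+$, and $H^\theta_{x,0+} = H_{x,0+}\cap G^\theta \subseteq K^\theta_+$. Hence the triviality of $\hat\phi$ on $K^\theta_+$ forces $\phi | H^\theta_{x,0+} = 1$.

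The main (and really the only) obstacle is locating and quoting the precise statement in \cite{ANewYu} that $\kappa | K_+$ acts by the scalar character $\hat\phi$; everything else is a one-line averaging argument. Once that reference is cited, no further work is needed and the proof is complete.
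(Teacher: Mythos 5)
Your proposal is correct and follows the same route as the paper: the paper's proof likewise rests on the fact that $\kappa|K_+$ is a multiple of $\hat\phi$ (quoted as Theorem 2.8.1(2) of \cite{ANewYu}) together with the definitional identity $\hat\phi|H_{x,0+}=\phi$, and your averaging/invariance computation just spells out the immediate consequence. The reference you were looking for is exactly that theorem, so nothing further is needed.
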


\begin{proof}
Our claims follow from the fact that $\kappa |K_+$ is a multiple of $\hat\phi$ (according  Theorem 2.8.1(2) \cite{ANewYu}) and the fact that, by definition,  $\hat\phi |H_{x,0+} = \phi$.
\end{proof}

\subsection{From $K$-orbits of involutions to $H_x$-orbits of involutions}

\medskip
\begin{definition}
An orbit $\mathscr{O}\in \Theta^K$ is {\bf relevant} if $ \langle \mathscr{O},\rho\rangle_K$ is nonzero.
\end{definition}

\begin{definition} An involution $\theta$ of $G$ is {\bf stabilizing} if $\theta (\vec\bG) = \vec\bG$ and $\theta x= x$.
\end{definition}

Proposition \ref{summaryofsection} implies that every relevant orbit contains a stabilizing involution.

\begin{lemma}\label{stabilizing}
If $\mathscr{O}\in \Theta^K$ is relevant then $H_x$ acts transitively on the set of stabilizing involutions in $\mathscr{O}$.
\end{lemma}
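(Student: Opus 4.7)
\medskip
\noindent\textbf{Proof plan.}
The plan is to establish the stronger assertion that if $k \in K$ satisfies the condition that $k \cdot \theta_1$ is stabilizing for our fixed stabilizing $\theta_1 \in \mathscr{O}$, then $k \in H_x \cdot K_{\theta_1}$, where $K_{\theta_1} := K \cap G_{\theta_1}$. The lemma follows immediately: given any stabilizing $\theta_2 \in \mathscr{O}$, write $\theta_2 = k \cdot \theta_1$ for some $k \in K$, then decompose $k = h \cdot m$ with $h \in H_x$ and $m \in K_{\theta_1}$, so that $\theta_2 = h \cdot \theta_1$, placing $\theta_2$ in the $H_x$-orbit of $\theta_1$.

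First I would use the factorization $K = H_x \cdot J^1 \cdots J^d$ from \cite{ANewYu} to write $k = h_0 \cdot j_1 \cdots j_d$ and thereby reduce to the case $k \in J^1 \cdots J^d$. I would then argue by downward induction on the layer $i$, starting with $j_d$. The hypothesis that both $\theta_1$ and $k \cdot \theta_1$ preserve $\bG^i$ forces the image of $j_{i+1}$ in the Heisenberg quotient $W_i = J^{i+1}/J^{i+1}_+$ to lie in the $\theta_1$-fixed subspace $W_i^+$ supplied by Lemma \ref{newpropfourtwo}. Lifting that image to an element of $J^{i+1,\theta_1} \subset K_{\theta_1}$, I would peel it off and absorb the remaining $J^{i+1}_+$-piece into the lower-layer factors $j_i, \dots, j_1$ using the commutator estimates $[J^{i+1}_+, J^j] \subset J^j_+$ intrinsic to Yu's construction.

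The main obstacle will be the bookkeeping attached to these layer-by-layer modifications: one must verify that replacing $j_{i+1}$ by a $J^{i+1,\theta_1}$-factor times a $J^{i+1}_+$-correction does not destroy the inductive invariant at lower layers, namely that, at each intermediate stage, the modified $k \cdot \theta_1$ still preserves $\bG^{i-1}$ and still fixes $x$. This should work out because the commutators arising from the modification drop into strictly deeper layers (so they become admissible corrections to the yet-to-be-handled $j_i, \dots, j_1$), and because $\theta_1$ is compatible with the filtration $\vec\bG$. The argument is modeled on the proof of Proposition 5.26 of \cite{MR2431732}, but should be noticeably simpler here because Lemma \ref{newpropfourtwo} already furnishes the $\theta_1$-equivariant Heisenberg splitting needed at each layer, rather than our having to rebuild it inside the proof.
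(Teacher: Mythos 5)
Your overall strategy --- reducing to $k\in J^1\cdots J^d$ and peeling off $\theta_1$-fixed factors layer by layer through the Heisenberg quotients --- is a genuinely different route from the paper's. The paper disposes of the lemma in a few lines: any stabilizing involution in $\mathscr{O}$ has the form $k\cdot\theta$ with $k\in K$, it preserves $\bH$ and fixes $x$ and hence stabilizes $H_x$, and Proposition 3.7 of \cite{MR2925798} is then quoted to place $k$ in $H_x$. In effect you are proposing to reprove that quoted structural result from scratch, which is legitimate in principle, but as written your sketch has two concrete gaps.

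First, the central claim that preservation of $\bG^i$ by both involutions forces the image of $j_{i+1}$ into $W_i^+$ is not supplied by Lemma \ref{newpropfourtwo}: that lemma is about the Weil representation $\tau_i$ (one-dimensionality of ${\rm Hom}_{W_i^+\times 1}(\tau_i,1)$), not about group structure. What you actually need is (a) that $j_{i+1}\theta_1(j_{i+1})^{-1}\in J^{i+1}$, once it normalizes $\bG^i$, lies in $G^i\cap J^{i+1}\subseteq J^{i+1}_+$ (a normalizer-versus-small-subgroup argument), and (b) that the $\theta_1$-fixed points of the quotient $W_i=J^{i+1}/J^{i+1}_+$ coincide with the image $W_i^+$ of $J^{i+1,\theta_1}$, i.e.\ a lifting/cohomology-vanishing statement of the kind invoked in Lemma \ref{somedecomp} via Proposition 2.12 and Lemma 3.12 of \cite{MR2431732}; neither input is identified in your plan. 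Second, the ``absorb the leftover into lower layers'' step fails as stated: after writing $j_{i+1}=j^\theta u$ and moving $j^\theta\in J^{i+1,\theta_1}\subseteq G_{\theta_1}$ to the right (which, note, does not change the involution $k\cdot\theta_1$ at all, so the inductive invariant you worry about is automatically preserved), the residue $u'\in J^{i+1}_+$ is not an element of $J^1\cdots J^i$, and containments such as $[J^{i+1}_+,J^j]\subseteq J^j_+$ only let you reorder factors --- they cannot make $u'$ migrate into lower layers. Disposing of it requires a further successive-approximation argument within the layer, and at the bottom layer one still needs an argument landing the residual factor in $H_x$, which is precisely where $\theta_1 x=x$ and the structure of $H_x$ must genuinely enter. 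These are exactly the points the paper sidesteps by citing \cite{MR2925798}, so your outline as it stands does not yet constitute a proof.
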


\begin{proof}
Fix a stabilizing involution $\theta$ in $\mathscr{O}$.
Clearly, every element of the $H_x$-orbit of $\theta$ is stabilizing.
Now suppose we are given a stabilizing involution in $\mathscr{O}$.  Then this involution must have the form $k\cdot \theta$, for some $k\in K$.  Then $k\cdot \theta$ must stabilize $H_x$ and thus, according to Proposition 3.7 \cite{MR2925798}, $k$ must lie in $H_x$.  This proves our assertion.
\end{proof}

Our next result is an adaptation of Lemma 3.3 \cite{MR2925798}.

\begin{lemma}\label{somedecomp}
If $\theta$ is a stabilizing involution then $$K_\theta = H_{x,\theta}J^{1,\theta}\cdots J^{d,\theta},$$ where $H_{x,\theta} = H_x\cap G_\theta$.
\end{lemma}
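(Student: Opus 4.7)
The plan is to adapt the proof of Proposition 3.3 of \cite{MR2925798} to the present setting, proceeding by induction on the length $d$ of the twisted Levi sequence. The inclusion $H_{x,\theta}J^{1,\theta}\cdots J^{d,\theta}\subseteq K_\theta$ is immediate, since each $J^{i,\theta}$ lies in $G^\theta\subseteq G_\theta$ and $H_{x,\theta}\subseteq G_\theta$ by definition, and all factors lie in $K$. So the real content is the reverse inclusion.

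For the base case $d=0$ we have $K=H_x$ and the statement is tautological. For the inductive step I would set $K^{d-1}=H_xJ^1\cdots J^{d-1}$ and use that, by Yu's construction, $K=K^{d-1}J^d$ with $K^{d-1}$ normalizing the pro-$p$ group $J^d$. Since $\theta$ is stabilizing, it preserves each $\bG^i$ and fixes $x$, and hence $\theta(J^i)=J^i$ and $\theta(K^{d-1})=K^{d-1}$; so the decomposition $K=K^{d-1}J^d$ is itself $\theta$-stable. Given $g\in K_\theta$, write $g=k_0j_0$ with $k_0\in K^{d-1}$ and $j_0\in J^d$. The defining relation $g\theta(g)^{-1}\in Z(\bG)(F)$, combined with the $\theta$-stability of both factors, will force $j_0\theta(j_0)^{-1}$ to lie in $(K^{d-1}\cap J^d)\cdot Z(\bG)(F)\cap J^d$.

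The heart of the argument is a pro-$p$ descent: using that $p$ is odd and that $\theta$ has order two, the involution acts semisimply on each Moy--Prasad graded piece of $J^d$, so the non-abelian cocycle class $[j_0\theta(j_0)^{-1}]$ is trivial modulo each step of the filtration. Standard successive-approximation yields $u\in J^d$ with $j:=u^{-1}j_0\theta(u)\in J^{d,\theta}$, and the corresponding $k:=gj^{-1}\in K^{d-1}$ automatically lies in $G_\theta$ (being a product of two elements of $G_\theta$), hence in $K^{d-1}_\theta$. Applying the inductive hypothesis to $k$ finishes the job, since the truncated sequence $(\bG^0,\ldots,\bG^{d-1})$ is itself $\theta$-stable with $\theta x=x$.

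The main obstacle I expect is the pro-$p$ descent step. One needs to verify that the $\theta$-action is compatible with the filtration of $J^d$ by the groups $J^d_{+}$ and its successive quotients, and that each graded piece, being an abelian $p$-group with $p\ne 2$, admits the eigenspace decomposition $V=V^\theta\oplus V^{-\theta}$; combining these layer-by-layer yields the cohomology-vanishing statement needed. Everything else — the existence of the Iwahori-style factorization $K=K^{d-1}J^d$, the $\theta$-invariance of the individual factors, and the absorption of any central correction — is bookkeeping once the structural results of \cite{ANewYu} and the stabilizing hypothesis from Lemmas \ref{thetafixesx} and \ref{stabilizing} are in hand.
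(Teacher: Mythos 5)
Your overall strategy coincides with the paper's: the paper proves this lemma by repeatedly applying Lemma 3.4 of \cite{MR2925798} to the factorization $K=(H_xJ^1\cdots J^{d-1})J^d$, which is exactly the inductive peeling you describe, so in effect you are reproving that abstract lemma inline. The easy inclusion, the reduction to the twisted cocycle $c=j_0\theta(j_0)^{-1}$, and your observation that $c$ lands (after absorbing the central term, which is harmless since $\bZ_\bG(F)\subseteq H_x\subseteq K^{d-1}$) in the intersection $K^{d-1}\cap J^d$ are all in order.

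The descent step as written, however, has a genuine gap. You trivialize the cocycle by some $u\in J^d$ and then assert that $k:=gj^{-1}$ lies in $K^{d-1}$; it does not: with $u$ taken only in $J^d$ one gets $k\in K^{d-1}J^d$, and the induction does not close. What is needed is a trivializer lying in $K^{d-1}\cap J^d$ itself, i.e.\ the vanishing $H^1_\theta(K^{d-1}\cap J^d)=1$ --- precisely the hypothesis the paper points out is missing from the statement of Lemma 3.4 of \cite{MR2925798}, and which it verifies by invoking Proposition 2.12 and Lemma 3.12 of \cite{MR2431732} for the structure and $\theta$-stability of this intersection. Your eigenspace/graded-piece argument is formulated for $J^d$ and does not automatically transfer to $K^{d-1}\cap J^d$ without identifying that group; this is where the stabilizing hypothesis and the cited structural results must enter, and your proposal never addresses it. A smaller slip in the bookkeeping: $j:=u^{-1}j_0\theta(u)$ is not forced to be $\theta$-fixed by the cocycle relation; the correct correction is $j=u^{-1}j_0$, so that $g=(k_0u)\,j$, once one has $j_0\theta(j_0)^{-1}=u\theta(u)^{-1}$ with $u\in K^{d-1}\cap J^d$, after which $k_0u\in K^{d-1}_\theta$ and the inductive hypothesis applies.
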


\begin{proof}
The desired result follows from repeatedly applying Lemma 3.4 \cite{MR2925798}, however, we note that the statement of the latter result is missing the hypothesis $H^1_\theta (A\cap B) =1$ used in the proof.

More precisely, one shows
$$K_\theta = (H_xJ^1\cdots J^{d-1})_\theta J^{d,\theta} =\cdots = H_{x,\theta}J^{1,\theta}\cdots J^{d,\theta}.$$
To see that the hypotheses of Lemma 3.4 \cite{MR2925798} are satisfied, we refer to Proposition 2.12 \cite{MR2431732} and Lemma 3.12 \cite{MR2431732}.
\end{proof}

When $\theta$ is a stabilizing involution and $\vartheta$ is its $H_x$-orbit then we define
$${\rm m}_{H_x} (\vartheta) = [ G_\theta :G^\theta H_{x,\theta}],$$ where $H_{x,\theta} = H_x\cap G_\theta$.  It is easy to see that this definition does not depend on the choice of $\theta$ in $\vartheta$.

\begin{lemma}\label{midentity}
If $\theta$ is a stabilizing involution then 
$${\rm m}_{H_x} (H_x \cdot \theta) = {\rm m}_{K}(K\cdot \theta).$$
\end{lemma}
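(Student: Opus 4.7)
The plan is to show that the two subgroups $G^\theta K_\theta$ and $G^\theta H_{x,\theta}$ of $G_\theta$ coincide; once this is done, their indices in $G_\theta$ are equal and the result is immediate. The inclusion $G^\theta H_{x,\theta} \subseteq G^\theta K_\theta$ is trivial since $H_{x,\theta} \subseteq K_\theta$, so the content is in the reverse inclusion.

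The starting point is the decomposition from Lemma \ref{somedecomp}, which gives
$$K_\theta = H_{x,\theta} J^{1,\theta} \cdots J^{d,\theta}.$$
Write a typical element of $K_\theta$ as $k = h\, j_1 \cdots j_d$ with $h\in H_{x,\theta}$ and $j_i\in J^{i,\theta}$. Each $J^{i,\theta}$ is by definition the fixed-point group of $\theta$ in $J^i$, so $j_i \in G^\theta$, and therefore the product $j_1 \cdots j_d$ lies in $G^\theta$. At the same time $h\in H_{x,\theta} = H_x\cap G_\theta$, so $h$ commutes with $\theta$ and hence normalizes $G^\theta$. Consequently
$$k = h\,(j_1\cdots j_d) = \bigl(h(j_1\cdots j_d)h^{-1}\bigr)\, h \in G^\theta\cdot H_{x,\theta},$$
which shows $K_\theta \subseteq G^\theta H_{x,\theta}$ and therefore $G^\theta K_\theta \subseteq G^\theta H_{x,\theta}$, as needed.

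The only mild subtlety is verifying that $H_{x,\theta}$ normalizes $G^\theta$, but this is precisely the content of $H_{x,\theta}\subseteq G_\theta$: an element that commutes with $\theta$ automatically preserves the subgroup of $\theta$-fixed points under conjugation. Apart from invoking Lemma \ref{somedecomp}, no further machinery is required, so this last step is not really an obstacle; the main job was to find the correct rearrangement $h(j_1\cdots j_d) = (h(j_1\cdots j_d)h^{-1})h$, which puts the $G^\theta$-factor to the left of the $H_{x,\theta}$-factor. With $G^\theta K_\theta = G^\theta H_{x,\theta}$ established, the equality
$$[G_\theta : G^\theta H_{x,\theta}] = [G_\theta : G^\theta K_\theta]$$
is just the definition of ${\rm m}_{H_x}(H_x\cdot\theta)$ and ${\rm m}_K(K\cdot\theta)$.
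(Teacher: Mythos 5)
Your proof is correct and is exactly the argument the paper has in mind: the paper's proof simply says the lemma ``follows directly from the definitions and Lemma \ref{somedecomp},'' and your argument spells out the routine details (each $J^{i,\theta}$ lies in $G^\theta$, and $H_{x,\theta}\subseteq G_\theta$ normalizes $G^\theta$, so $G^\theta K_\theta = G^\theta H_{x,\theta}$, whence the indices agree). No discrepancy with the paper's approach.
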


\begin{proof}
This follows directly from the definitions and Lemma \ref{somedecomp}.
\end{proof}

Recall that if $\theta\in \Theta$ has $H_x$-orbit $\vartheta$ then we have defined
$$\langle \vartheta,\rho\rangle_{H_x} = \dim {\rm Hom}_{H_x^\theta}(\rho,\varepsilon_{{}_{H_x,\theta}}),$$ and we note that this definition does not depend on the choice of $\theta$ in $\vartheta$.
We also   write $\vartheta \sim\rho$ when $H_x$ is $\theta$-stable for all $\theta \in \vartheta$ and, in addition, $\langle \vartheta,\rho\rangle_{H_x}$ is nonzero.

\begin{proposition}\label{mainpadicprop}
$$\langle \Theta, \rho \rangle_G = \sum_{\vartheta\sim \rho} {\rm m}_{H_x}(\vartheta )\, \langle \vartheta,\rho\rangle_{H_x}.$$
\end{proposition}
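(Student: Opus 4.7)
The strategy is to assemble the proposition from three ingredients already in place: the $K$-orbit decomposition recorded at the beginning of \S\ref{sec:symmetry}, Proposition \ref{summaryofsection} together with Lemma \ref{stabilizing}, and the multiplicity identity of Lemma \ref{midentity}. I would begin by restricting the $K$-orbit sum
$$\langle \Theta, \rho \rangle_G = \sum_{\mathscr{O}\in \Theta^K} {\rm m}_K(\mathscr{O})\, \langle \mathscr{O},\rho\rangle_K$$
to the relevant orbits, since the non-relevant orbits contribute zero by definition.

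The heart of the argument is to exhibit a bijection between the relevant $K$-orbits in $\Theta$ and the $H_x$-orbits $\vartheta\sim\rho$. In the forward direction, any relevant orbit $\mathscr{O}$ contains a stabilizing involution by Proposition \ref{summaryofsection}(1)--(2), and by Lemma \ref{stabilizing} all such stabilizing involutions form a single $H_x$-orbit $\vartheta_\mathscr{O}$; Proposition \ref{summaryofsection}(6) then gives $\langle\vartheta_\mathscr{O},\rho\rangle_{H_x}=\langle\mathscr{O},\rho\rangle_K\neq 0$, and $\theta(H_x)=H_x$ holds because $\theta x=x$, so $\vartheta_\mathscr{O}\sim\rho$. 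Conversely, any $\vartheta\sim\rho$ must consist of stabilizing involutions: each $\theta\in\vartheta$ fixes the vertex $x$ (since $\theta(H_x)=H_x$ and $x$ is characterized by its stabilizer) and preserves $\vec\bG$ (which is already implicit in the definition of $\varepsilon_{{}_{H_x,\theta}}$ via the spaces $\mathfrak{W}_i^+$). Under the ``weaker hypotheses'' form of Proposition \ref{summaryofsection}(5), the $K$-orbit $K\cdot\theta$ is then relevant and satisfies $\langle K\cdot\theta,\rho\rangle_K=\langle\vartheta,\rho\rangle_{H_x}$. These two constructions are mutually inverse by Lemma \ref{stabilizing}.

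With the bijection $\mathscr{O}\leftrightarrow\vartheta_\mathscr{O}$ in hand, Lemma \ref{midentity} provides ${\rm m}_K(\mathscr{O})={\rm m}_{H_x}(\vartheta_\mathscr{O})$ and Proposition \ref{summaryofsection}(6) provides $\langle\mathscr{O},\rho\rangle_K=\langle\vartheta_\mathscr{O},\rho\rangle_{H_x}$; termwise substitution in the reduced $K$-sum yields the desired formula.

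The main obstacle is the careful verification of the surjectivity half of the bijection, where one must extract the hypothesis $\hat\phi|K^\theta_+=1$ needed by Proposition \ref{summaryofsection}(5) from $\vartheta\sim\rho$ alone, without a circular appeal to relevance. This should follow by first observing that $\varepsilon_{{}_{H_x,\theta}}$ takes values in $\{\pm 1\}$ and hence is trivial on the pro-$p$ group $H_{x,0+}^\theta$ (using $p$ odd), so nonvanishing of $\mathrm{Hom}_{H_x^\theta}(\rho,\varepsilon_{{}_{H_x,\theta}})$ forces $\phi|H_{x,0+}^\theta=1$, and then bootstrapping this triviality up through the explicit construction of $\hat\phi$ from $\phi$ and the generic characters $\zeta_i$ on $J^{i+1}_+$. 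Once this housekeeping point is settled, everything else is a direct rewriting of the already established decomposition.
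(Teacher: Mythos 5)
Your overall architecture is the same as the paper's: restrict the $K$-orbit sum to relevant orbits, match each relevant $\mathscr{O}$ with the unique $H_x$-orbit $\vartheta$ of stabilizing involutions it contains (Lemma \ref{stabilizing}), transfer the multiplicities and constants via Lemma \ref{midentity} and Proposition \ref{summaryofsection}, and then prove the converse: every $\vartheta\sim\rho$ has relevant $K$-orbit. Your treatment of the triviality of $\hat\phi$ on $K^\theta_+$ (first get $\phi|H^\theta_{x,0+}=1$ from the fact that $\varepsilon_{{}_{H_x,\theta}}$ is a sign character of a pro-$p$ group with $p$ odd, then pass to $\hat\phi$) is exactly the paper's argument.

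There is, however, one genuine gap in your converse direction: the claim that each $\theta\in\vartheta$ preserves $\vec\bG$ because this is ``already implicit in the definition of $\varepsilon_{{}_{H_x,\theta}}$ via the spaces $\mathfrak{W}_i^+$.'' That is not an argument: $\varepsilon_{{}_{H_x,\theta}}$ (and each $\mathfrak{W}_i^+$, which is merely a $\theta$-fixed subspace) is defined whenever $\theta(H_x)=H_x$, with no requirement that $\theta(\bG^i)=\bG^i$; so $\vartheta\sim\rho$ does not formally presuppose $\theta$-stability of the twisted Levi sequence. But this stability is indispensable: it is a hypothesis of the ``weaker hypotheses'' form of Proposition \ref{summaryofsection}(5) (equivalently of Lemma \ref{ithfactor} and Corollary \ref{corktheta}), and even your bootstrapping of $\phi|H^\theta_{x,0+}=1$ to $\hat\phi|K^\theta_+=1$ uses it, since $\hat\phi$ is built from data attached to the $\bG^i$. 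The paper closes this gap by a separate inductive argument: $\theta$ stabilizes $H$ and $x$ (this is Lemma 3.5 of \cite{MR2925798}, not just the slogan that ``$x$ is characterized by its stabilizer''), and then each $\bG^i$ is $\theta$-stable because it is canonically characterized as the unique maximal $F$-subgroup of $\bG$ containing $\bH$ that is a Levi subgroup over $E$ and satisfies $\phi|H^{i-1}_{x,r_i}=1$, a characterization that is manifestly preserved by $\theta$ once $\bH$, $x$ and the relevant triviality of $\phi$ are $\theta$-stable. You need to supply this step (or an equivalent one); without it the appeal to Proposition \ref{summaryofsection}(5) in the surjectivity half is unjustified.
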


\begin{proof}
Recall that
$$\langle \Theta, \rho \rangle_G = \sum_{\mathscr{O}\in \Theta^K} {\rm m}_K(\mathscr{O})\, \langle \mathscr{O},\rho\rangle_K.$$
Suppose $\langle \mathscr{O},\rho\rangle_K$ is nonzero or, in other words, $\mathscr{O}$ is relevant.  Then according to Lemma \ref{stabilizing}, $\mathscr{O}$ contains a unique $H_x$-orbit $\vartheta$ that consists of all the stabilizing involutions in $\mathscr{O}$.    Lemma \ref{midentity} implies that ${\rm m}_K(\mathscr{O}) = {\rm m}_{H_x} (\vartheta)$ and Proposition \ref{summaryofsection}(5) implies that $\langle \mathscr{O},\rho\rangle_K= \langle \vartheta,\rho\rangle_{H_x}$.  Proposition \ref{summaryofsection} also implies $\theta (H_x) = H_x$ and hence $\vartheta \sim \rho$.

It remains to show that if $\vartheta$ is any $H_x$-orbit in $\Theta$ such that $\vartheta \sim \rho$ then its $K$-orbit $\mathscr{O}$ is relevant.  
Given such an orbit $\vartheta$, fix $\theta\in \vartheta$.  Lemma 3.5 \cite{MR2925798} implies that $\theta$ stabilizes $\bH$ and $x$.
According to Lemma \ref{ithfactor} and Corollary \ref{corktheta}, it now suffices to show that $\hat\phi |K^\theta_+=1$ and $\theta (\vec\bG) = \vec\bG$.

One can inductively show that the groups $\bG^d,\dots ,\bG^0$ are $\theta$-stable by using the $\theta$-stability of $\bH$ and $x$ together with the fact that  $\bG^i$ is defined (in \S2.4 \cite{ANewYu}) to be the (unique) maximal subgroup of $\bG$ such that:
\begin{itemize}
\item $\bG^i$ is defined over $F$,
\item $\bG^i$ contains $\bH$,
\item $\bG^i$ is a Levi subgroup of $\bG$ over $E$,
\item $\phi |H^{i-1}_{x,r_i}=1$, where $\bH^{i-1} = \bG^i_{\rm der}\cap \bH$.
\end{itemize}

Finally, we observe that $\varepsilon_{{}_{H_x,\theta}} |H_{x,0+}^\theta =1$ since it is a character of exponent 2 of a pro-$p$-group with $p$ odd.  Therefore, $\vartheta \sim \rho$ implies that $\phi |H_{x,0+}^\theta=1$.  Since $\theta (\vec\bG) = \vec\bG$ and $\theta x = x$, it is easy to see from the definition of $\hat\phi$ that $\phi |H_{x,0+}^\theta =1$ implies $\hat\phi |K^\theta_+=1$.  (See \S3.9 \cite{ANewYu} for information on the definition of $\hat\phi$.)
\end{proof}

\section{Adapting Lusztig's finite field theory}\label{sec:Lusztigsec}

The main purpose of this section is to prove Proposition \ref{mainfiniteprop} or, in other words, the finite field case of Theorem \ref{maintheorem}.  We also study Lusztig's character $\varepsilon$ (see \cite[page 60]{MR1106911}) and interpret it in terms of determinants of adjoint actions, and then compute it in
 Proposition \ref{Lusztigisdetad}.

\subsection{$\F_q$-rank and its parity}

Let $q$ be a power of an odd prime $p$.
Fix a connected, reductive group $\mathsf{G}$ defined over $\F_q$, and let ${\rm rank}(\mathsf{G})$ denote the $\F_q$-rank of $\mathsf{G}$ and, following Lusztig \cite[page 60]{MR1106911},  define
$$\sigma (\mathsf{G}) = (-1)^{{\rm rank}(\mathsf{G})}.$$

Fix a maximal torus $\mathsf{T}$ in $\mathsf{G}$ that is defined over $\F_q$.  The product $\sigma (\mathsf{G})\, \sigma(\mathsf{T})$ is ubiquitous in the Deligne-Lusztig theory of virtual characters of $\mathsf{G}$ associated to $\mathsf{T}$.

A useful formula for computing this product is 
$$\sigma (\mathsf{G})\, \sigma(\mathsf{T})  = (-1)^{\dim (\mathsf{U}/(\mathsf{U}\cap F\mathsf{U}))},$$ where  $\mathsf{U}$ is the unipotent radical of a Borel subgroup $\mathsf{B}$ of $\mathsf{G}$ containing $\mathsf{T}$, and $F$ is the Frobenius automorphism.  (See \cite[page 60]{MR1106911}.)

The latter formula is also useful in the  form
$$\sigma (\mathsf{G}) = \sigma(\mathsf{T})  (-1)^{\dim (\mathsf{U}/(\mathsf{U}\cap F\mathsf{U}))}$$
as a tool to compute $\sigma (\mathsf{G})$.

Note that when $\mathsf{B}$ (and hence $\mathsf{U}$) is $F$-stable then $\mathsf{T}$ is maximally split.  In this case, the resulting identity $\sigma (\mathsf{G}) = \sigma(\mathsf{T})$ can also be seen as a consequence of the fact that the $\F_q$-rank of $\mathsf{G}$ is  the $\F_q$-rank of any maximally split torus in $\mathsf{G}$.

\subsection{A formula for $\sigma (\mathsf{G})\, \sigma(\mathsf{T})$}

Let $\Phi = \Phi (\mathsf{G},\mathsf{T})$.  
Assume we have fixed a Borel subgroup $\mathsf{B}$ containing $\mathsf{T}$ and having unipotent radical $\mathsf{U}$.  Let $\Phi^+$ be the associated system of positive roots in $\Phi$.

Let $\Gamma = {\rm Gal}(\overline{\F}_q/\F_q)$ be the absolute Galois group of $\F_q$.  Then $\Gamma$ is generated by the Frobenius automorphism $F(x) = x^q$.  Suppose $\mathscr{O}$ is a $\Gamma$-orbit in $\Phi$.
If $a$ is any root in $\mathscr{O}$ and if $d$ is the order of $\mathscr{O}$ then the elements of $\mathscr{O}$ are precisely the elements $a,Fa,F^2a,\dots, F^{d-1}a$.
In this situation, $\F_{q^d}$ is the (minimal) field of definition of $a$ (and all of the elements of $\mathscr{O}$).

Let $-\mathscr{O} = \{ -a\, : a\in \mathscr{O}\}$.  Then $-\mathscr{O}$ is also a $\Gamma$-orbit in $\Phi$.
If $\mathscr{O} = -\mathscr{O}$, we say $\mathscr{O}$ is a {\bf symmetric orbit}.  
Let $(\Gamma\bs \Phi)_{\rm sym}$ denote the set of symmetric orbits.

\begin{proposition}\label{sigGT}
$$\sigma (\mathsf{G})\, \sigma(\mathsf{T})  = (-1)^{\# (\Gamma\bs \Phi)}= (-1)^{\# (\Gamma\bs \Phi)_{\rm sym}}$$ 
\end{proposition}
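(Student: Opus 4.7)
The plan is to rewrite the exponent in the identity $\sigma(\mathsf{G})\,\sigma(\mathsf{T}) = (-1)^{\dim(\mathsf{U}/(\mathsf{U}\cap F\mathsf{U}))}$ recalled in the previous subsection as a sum indexed by $\Gamma$-orbits on $\Phi$, and then to show that exactly the symmetric orbits contribute an odd amount. Since $\mathsf{U}$ and $F\mathsf{U}$ are directly spanned by their root subgroups (for $\Phi^+$ and $F\Phi^+$ respectively),
$$\dim(\mathsf{U}/(\mathsf{U}\cap F\mathsf{U})) \;=\; |\Phi^+|-|\Phi^+\cap F\Phi^+| \;=\; \#\{a\in\Phi^+ \,:\, F^{-1}a\in -\Phi^+\},$$
a count that is additive over $F$-orbits in $\Phi$, so I will analyze one orbit at a time.

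For an orbit $\mathscr{O}$ with cycle $a_0,\,Fa_0,\ldots,F^{d-1}a_0$, set $s_i=1$ if $F^i a_0\in\Phi^+$ and $s_i=0$ otherwise, and let $t(\mathscr{O})$ be the number of indices (mod $d$) where $s_{i-1}=0$ and $s_i=1$. Then the contribution of $\mathscr{O}$ to the displayed cardinality is exactly $t(\mathscr{O})$, which by cyclicity also counts the ``$+\to-$'' transitions around the cycle. If $\mathscr{O}$ is not symmetric, I pair it with $-\mathscr{O}$: the sign sequence of $-\mathscr{O}$ is obtained from that of $\mathscr{O}$ by interchanging $0$s and $1$s, so the two types of transitions swap and $t(-\mathscr{O})=t(\mathscr{O})$. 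Hence non-symmetric orbits contribute in pairs and give an even total; this simultaneously shows that $\#(\Gamma\bs\Phi) - \#(\Gamma\bs\Phi)_{\rm sym}$ is even, supplying the second equality of the proposition.

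The main obstacle is to verify that each symmetric orbit contributes an \emph{odd} number, and this reduces to a short structural statement: if $\mathscr{O}$ is symmetric then there is a smallest $k>0$ with $F^k a_0=-a_0$; from $F^{2k}a_0=a_0$ together with the minimality of $d$ one deduces $d=2m$ and $k=m$, and applying powers of $F$ shows that $F^m$ acts as $-1$ throughout $\mathscr{O}$. Consequently $s_{i+m}=1-s_i$, the transition indicator $\sigma_i:=s_i\oplus s_{i-1}$ is $m$-periodic, and the total number of transitions around the cycle is twice the number of sign changes along $s_0,s_1,\ldots,s_m$; since $s_0\neq s_m$, that number is odd, so $t(\mathscr{O})$ is odd. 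Summing over orbits gives
$$\dim(\mathsf{U}/(\mathsf{U}\cap F\mathsf{U})) \;\equiv\; \#(\Gamma\bs\Phi)_{\rm sym} \;\equiv\; \#(\Gamma\bs\Phi) \pmod 2,$$
which yields both equalities of the proposition. Everything outside of the step ``$F^m=-1$ on $\mathscr{O}$'' is routine cyclic-word combinatorics.
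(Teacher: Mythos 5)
Your proof is correct and follows the same overall route as the paper: both start from $\sigma(\mathsf{G})\,\sigma(\mathsf{T})=(-1)^{\dim(\mathsf{U}/(\mathsf{U}\cap F\mathsf{U}))}$, rewrite the exponent orbit by orbit as the number of sign changes from $-$ to $+$ around each $\Gamma$-cycle, pair every non-symmetric orbit with its negative (which also gives the second equality), and reduce everything to showing that each symmetric orbit contributes an odd count. The only genuine divergence is in that last step: the paper fixes the configuration $s_1,\dots,s_c,-s_1,\dots,-s_c$ and reduces to the base case where $a,Fa,\dots,F^{c-1}a$ are all positive by flipping one sign at a time, whereas you observe that $F^m$ (your $m$ is the paper's $c$) acts as $-1$ on the orbit, so the transition indicator $s_i\oplus s_{i-1}$ is $m$-periodic, the cyclic total is twice the number of changes along $s_0,\dots,s_m$, and that number is odd because $s_0\neq s_m$. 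Your variant is a bit cleaner and in fact more precise: under a single sign flip the count of $-$ to $+$ changes is not literally unchanged (it can jump by $\pm 2$), only its parity is, which is what the paper's reduction actually uses and what your periodicity argument delivers directly.
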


\begin{proof}
Given a $\Gamma$-orbit $\mathscr{O}$, suppose we fix $a\in \mathscr{O}$ and arrange the elements $a,Fa, \dots , F^{d-1}a$ as equally spaced points on a circle listed in clockwise order.  Now label each element by $+$ or $-$ according to whether it is a positive or negative root.  Let $s(\mathscr{O})$ be the number of sign changes from $-$ to $+$ as one goes clockwise around the circle.

Then the formula
$$\sigma (\mathsf{G})\, \sigma(\mathsf{T})  = (-1)^{\dim (\mathsf{U}/(\mathsf{U}\cap F\mathsf{U}))}$$ 
implies that
$$\sigma (\mathsf{G})\, \sigma(\mathsf{T})  = \prod_{\mathscr{O}\in \Gamma\bs \Phi} (-1)^{s(\mathscr{O})}.$$

Since $s(-\mathscr{O}) = s(\mathscr{O})$ for all $\mathscr{O}$, we have
$$\sigma (\mathsf{G})\, \sigma(\mathsf{T})  = \prod_{\mathscr{O}\in (\Gamma\bs \Phi)_{\rm sym}} (-1)^{s(\mathscr{O})},$$
where $(\Gamma\bs \Phi)_{\rm sym}$ is the set of symmetric orbits.

Now suppose $\mathscr{O}$ is a symmetric orbit and choose a root $a\in \mathscr{O}$.  Let $c$ be the minimal positive integer such that $F^c a=-a$.  Then the elements $a,Fa, \dots, F^{c-1}a,-a,-Fa, \dots, -F^{c-1}a$ are distinct and this sequence must be identical to the sequence $a,Fa,F^2a,\dots, F^{d-1}a$.  In particular, we note that $d=2c$.

In the simplest case, the roots  $a,Fa, \dots, F^{c-1}a$ are all positive and thus the roots $-a,-Fa, \dots, -F^{c-1}a$ are all negative.  In this case, $s(\mathscr{O})=1$.
If $d=2$ or $4$, given an orbit $\mathscr{O}$, one can always choose $a\in \mathscr{O}$ so that the sign pattern is of the latter type.

Now suppose we have an orbit $\mathscr{O}$ of length $d\ge 6$.  In general, we will have some configuration $s_1,\dots, s_c, -s_1,\dots, -s_c$ of $\pm$ signs.
It is easy to check that if we reverse the sign $s_2$ (and hence $-s_2$) then $(-1)^{s(\mathscr{O})}$ is unchanged.  (One only needs to consider $s_1$, $s_2$, $s_3$ and their negatives and check the eight possible sign configurations, which essentially amounts to checking the case of $d=6$.)
One can also adjust $s_3,\dots , s_{c-1}$ so that one essentially reduces to the case of $d=6$.

We deduce that, in general, $s(\mathscr{O})$ is odd if $\mathscr{O}$ is a symmetric root.  Our assertion now follows.
\end{proof}

\subsection{A formula for $\varepsilon_{{}_{\mathsf{T},\theta}}$}

Let $\theta$ be an $\F_q$-automorphism of order two of $\mathsf{G}$ that preserves $\mathsf{T}$.
Let $\mathsf{T}_+ = (\mathsf{T}^\theta)^\circ$ be the identity component of the fixed points of $\theta$ in $\mathsf{T}$.
According to \cite[Section 9]{MR651417}, $\mathsf{T}_+$ is identical to the set of all $t\theta (t)$ with $t\in \mathsf{T}$.  

So if $a\in \Phi$ then $a|\mathsf{T}_+$ is trivial precisely when $\theta a = -a$.

According to \cite[Corollary 5.4.7]{MR1642713}, we have the following relation between the centralizers of $\mathsf{T}$ in $\mathsf{G}$ and its Lie algebra:
$${\rm Lie}(Z_{\mathsf{G}}(\mathsf{T}_+)) = Z_{{\rm Lie}(\mathsf{G})}(\mathsf{T_+}).$$  In addition, we observe that 
$$Z_{{\rm Lie}(\mathsf{G})} (\mathsf{T}_+) = {\rm Lie}(\mathsf{T}) \oplus \bigg( \bigoplus_{a\in \Phi,\, a|\mathsf{T}_+=1}\bfr{g}_a \biggr).$$

Letting  $\Phi_\theta = \Phi (Z_\mathsf{G} (\mathsf{T}_+),\mathsf{T})$, we now have:

\begin{lemma}\label{PhiTheta} Given $a\in\Phi$ then the following are equivalent:
\begin{enumerate}
\item $a\in \Phi_\theta$,
\item $a|\mathsf{T}_+ =1$,
\item $\theta a = -a$.
\end{enumerate}
\end{lemma}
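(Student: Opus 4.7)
The plan is to deduce all three equivalences almost immediately from the structural facts assembled in the paragraph preceding the lemma, since those paragraphs have essentially done all of the real work.

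The equivalence (2) $\Leftrightarrow$ (3) is recorded explicitly in the remark just before the Lie algebra computation. To make it airtight, I would cite Steinberg's description $\mathsf{T}_+ = \{t\theta(t):t\in \mathsf{T}\}$ and compute, for $a\in \Phi$ and $t\in \mathsf{T}$,
$$a(t\theta(t)) = a(t)\cdot a(\theta(t)) = a(t)\cdot (\theta a)(t),$$
so that $a|_{\mathsf{T}_+}=1$ is equivalent to $a+\theta a = 0$ in the character lattice, i.e., $\theta a=-a$.

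For (1) $\Leftrightarrow$ (2), I would apply the displayed Lie algebra identity
$$\mathrm{Lie}(Z_\mathsf{G}(\mathsf{T}_+)) = Z_{\mathrm{Lie}(\mathsf{G})}(\mathsf{T}_+) = \mathrm{Lie}(\mathsf{T}) \oplus \bigoplus_{\substack{a\in \Phi\\ a|_{\mathsf{T}_+}=1}} \mathfrak{g}_a.$$
Since $\mathsf{T}_+$ is a subtorus of $\mathsf{T}$, the centralizer $Z_\mathsf{G}(\mathsf{T}_+)$ is a connected reductive subgroup of $\mathsf{G}$ having $\mathsf{T}$ as a maximal torus, so its root system with respect to $\mathsf{T}$ is exactly the set of weights of $\mathsf{T}$ occurring in $\mathrm{Lie}(Z_\mathsf{G}(\mathsf{T}_+))\ominus \mathrm{Lie}(\mathsf{T})$. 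Comparing this with the root space decomposition of $\mathrm{Lie}(\mathsf{G})$ under $\mathsf{T}$, a root $a\in \Phi$ lies in $\Phi_\theta$ precisely when $\mathfrak{g}_a$ is a summand of the right-hand side, i.e., when $a|_{\mathsf{T}_+}=1$.

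The only step requiring any care is the classical fact that $Z_\mathsf{G}(\mathsf{T}_+)$ is connected reductive with maximal torus $\mathsf{T}$, so that one is justified in reading off $\Phi_\theta$ from the $\mathsf{T}$-weights of its Lie algebra; this is standard for centralizers of subtori in connected reductive groups. Once that is noted, the lemma reduces to matching the two weight-space decompositions, and there is nothing else to prove.
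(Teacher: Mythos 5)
Your proposal is correct and follows essentially the same route as the paper, which states the lemma as an immediate consequence of the two facts assembled just before it: Steinberg's description $\mathsf{T}_+=\{t\theta(t):t\in\mathsf{T}\}$ (giving (2)$\Leftrightarrow$(3) via $a(t\theta(t))=(a+\theta a)(t)$) and the identity ${\rm Lie}(Z_{\mathsf{G}}(\mathsf{T}_+))=Z_{{\rm Lie}(\mathsf{G})}(\mathsf{T}_+)={\rm Lie}(\mathsf{T})\oplus\bigoplus_{a|\mathsf{T}_+=1}\bfr{g}_a$ (giving (1)$\Leftrightarrow$(2) by comparing weight spaces, with the standard fact that $Z_{\mathsf{G}}(\mathsf{T}_+)$ is connected reductive with maximal torus $\mathsf{T}$). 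Your write-up just makes these citations and the weight-space comparison explicit, which is fine.
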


Lusztig defines $\varepsilon = \varepsilon_{{}_{\mathsf{T}}} = \varepsilon_{{}_{\mathsf{T},\theta}}=  \varepsilon_{{}_{\mathsf{T}(\F_q),\theta}}$ by
$$\varepsilon (t) = \sigma (Z_{\mathsf{G}}(\mathsf{T}_+))\, \sigma (Z_{\mathsf{G}}(\mathsf{T}_+)\cap Z_{\mathsf{G}} ( t)^\circ)$$
for $t\in \mathsf{T}^\theta ({\frak f})$.
 
 \begin{proposition}\label{Lusztigisdetad} For all $t\in \mathsf{T}^\theta ( \F_q)$,
 $$\varepsilon_{{}_{\mathsf{T},\theta}} (t) 
 =\prod_{a\in \Gamma\bs \Phi_\theta} a(t).$$
 \end{proposition}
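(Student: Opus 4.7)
The plan is to prove the two equalities separately: first $\varepsilon_{{}_{\mathsf{T},\theta}}(t) = \prod_{a\in\Gamma\bs\Phi_\theta}a(t)$ by applying Proposition \ref{sigGT} twice, and then $\det(\Ad(t)|{\rm Lie}(\mathsf{G})^\theta(\F_q)) = \prod_{a\in\Gamma\bs\Phi_\theta}a(t)$ by a root-space analysis.

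For the first equality, I would set $\mathsf{H}=Z_\mathsf{G}(\mathsf{T}_+)$ and $\mathsf{H}'=\mathsf{H}\cap Z_\mathsf{G}(t)^\circ$. By Lemma \ref{PhiTheta}, $\Phi(\mathsf{H},\mathsf{T})=\Phi_\theta$, and since $t$ is semisimple $Z_\mathsf{G}(t)^\circ$ has root system $\{a\in\Phi:a(t)=1\}$, so $\Phi(\mathsf{H}',\mathsf{T})=\{a\in\Phi_\theta:a(t)=1\}$. Applying Proposition \ref{sigGT} to both pairs $(\mathsf{H},\mathsf{T})$ and $(\mathsf{H}',\mathsf{T})$ and multiplying (the factors $\sigma(\mathsf{T})^2$ cancel) gives
$$\varepsilon(t)=\sigma(\mathsf{H})\sigma(\mathsf{H}')=(-1)^{\#\{\mathscr{O}\in\Gamma\bs\Phi_\theta\,:\,a_\mathscr{O}(t)=-1\}}.$$
The relations $\theta a=-a$ and $\theta t=t$ force $a(t)=a(t)^{-1}\in\{\pm 1\}\subset\F_q$, so $a(t)$ is Galois-invariant and constant on each orbit; hence $\prod_{a\in\Gamma\bs\Phi_\theta}a(t)$ is well-defined (independent of the representative) and equals the displayed sign.

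For the determinant, I would work over $\overline{\F}_q$, where the $\F_q$-determinant is unchanged by base change, and use the root-space decomposition $\mathrm{Lie}(\mathsf{G})=\mathrm{Lie}(\mathsf{T})\oplus\bigoplus_{a\in\Phi}\bfr{g}_a$. Its $\theta$-fixed part splits along $\theta$-orbits on $\Phi$: the torus piece $\mathrm{Lie}(\mathsf{T}_+)$ contributes trivially; a $\theta$-orbit $\{a,\theta a\}$ with $\theta a\ne -a$ contributes eigenvalue $a(t)$, while its partner orbit $\{-a,-\theta a\}$ contributes $a(t)^{-1}$, so the joint contribution is $1$; singleton $\theta$-orbits $\{a\}$ with $\theta a=a$ pair with $\{-a\}$ and again contribute $1$ in total, using that $\theta$ fixes the coroot $a^\vee\in\mathrm{Lie}(\mathsf{T})$ to conclude that the signs of $\theta|\bfr{g}_a$ and $\theta|\bfr{g}_{-a}$ agree. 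The surviving contributions come from $\pm$-pairs $\{a,-a\}\subseteq\Phi_\theta$, each giving a one-dimensional $\theta$-fixed subspace on which $\Ad(t)$ acts by the scalar $a(t)\in\{\pm 1\}$.

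The final step is to match $\prod_{\{a,-a\}\subseteq\Phi_\theta}a(t)$ with $\prod_{a\in\Gamma\bs\Phi_\theta}a(t)$. Grouping the $\pm$-pairs into Galois orbits (using that $a(t)\in\F_q$ is constant on each Galois orbit of roots), the determinant contribution of a Galois orbit of $\pm$-pairs of size $e$ is $a(t)^e$. One then compares with Galois orbits of roots: a symmetric Galois orbit of roots of size $d$ yields a Galois orbit of $\pm$-pairs of size $d/2$, while a non-symmetric pair $(\mathscr{O},-\mathscr{O})$ of Galois orbits of roots (each of size $d$) yields a single Galois orbit of $\pm$-pairs of size $d$. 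The identity then reduces to a sign-counting statement essentially parallel to the one in the proof of Proposition \ref{sigGT}, and this bookkeeping (in particular verifying the parities on the two sides) is where I expect the most delicate work.
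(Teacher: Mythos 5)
Your first equality is proved exactly the way the paper proves it: apply Proposition \ref{sigGT} to $Z_{\mathsf{G}}(\mathsf{T}_+)$ and to $Z_{\mathsf{G}}(\mathsf{T}_+)\cap Z_{\mathsf{G}}(t)^{\circ}$, multiply, and use $\theta a=-a$ together with $\theta(t)=t$ to see that the orbits that survive are precisely those with $a(t)=-1$, each value $a(t)$ being $\pm 1$ and constant on $\Gamma$-orbits. That half is correct, and it is in fact all that the paper's own proof contains: the published proof establishes only the equality of Lusztig's $\varepsilon$ with $\prod_{a\in\Gamma\bs\Phi_\theta}a(t)$ and is silent about the determinant.

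The genuine gap is in your second half. Your root-space analysis is sound (including the point $\epsilon_a=\epsilon_{-a}$ for $\theta$-fixed roots, which is fine for $p$ odd) and correctly yields $\det(\Ad(t)\,|\,{\rm Lie}(\mathsf{G})^\theta)=\prod a(t)$, the product running over the $\pm$-pairs $\{a,-a\}\subseteq\Phi_\theta$. But the step you defer, matching this with the product over $\Gamma$-orbits, is not ``bookkeeping essentially parallel to Proposition \ref{sigGT}'': it is the entire content of the middle equality, and as a formal statement about signs it is false. Concretely, take $\mathsf{G}=\mathrm{PGL}_2$, $\mathsf{T}$ the split diagonal torus, $\theta={\rm Int}(w)$ with $w$ the image of $\left(\begin{smallmatrix}0&1\\-1&0\end{smallmatrix}\right)$, and $t$ the image of $\mathrm{diag}(-1,1)$, which lies in $\mathsf{T}^\theta(\F_q)$. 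Then ${\rm Lie}(\mathsf{G})^\theta$ is the line spanned by $\left(\begin{smallmatrix}0&1\\-1&0\end{smallmatrix}\right)$ and $\Ad(t)$ acts on it by $-1$, whereas $\Phi_\theta=\{a,-a\}$ consists of two singleton $\Gamma$-orbits with $a(t)=(-a)(t)=-1$, so $\prod_{a\in\Gamma\bs\Phi_\theta}a(t)=+1$ (which also equals Lusztig's $\varepsilon(t)=\sigma(\mathsf{G})\sigma(\mathsf{T})$ here). So the parity of the number of $\pm$-pairs with $a(t)=-1$ can genuinely differ from the parity of the number of $\Gamma$-orbits with $a(t)=-1$; any correct argument must inject the arithmetic hypothesis under which the proposition is used, namely ellipticity of $\mathsf{T}$ (the standing assumption in the paper's application). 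For example, for elliptic $\mathsf{T}$ every $\Gamma$-orbit of roots sums to zero, which forces orbits on which $a(t)=-1$ to have even length, and one must still handle symmetric orbits of length divisible by $4$ by analyzing which $2$-torsion elements of $\mathsf{T}$ can actually be $F$-rational and $\theta$-fixed. None of this appears in your outline, so the determinant identity remains unproved as your proposal stands.
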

 
 \begin{proof}
Fix  $t\in \mathsf{T}^\theta({\frak f})$.
Then
Proposition \ref{sigGT} implies that
$$\sigma (Z_{\mathsf{G}}(\mathsf{T}_+)) = (-1)^{\# (\Gamma\bs \Phi_\theta)}$$ and
$$\sigma (Z_{\mathsf{G}}(\mathsf{T}_+)\cap Z_{\mathsf{G}} ( t)^\circ) = (-1)^{\# ( (\Gamma\bs \Phi_\theta)_t)},$$
where $(\Gamma\bs \Phi_\theta)_t$ is the set of $\Gamma$-orbits of 
roots $a\in \Phi_\theta$ such that $a(t)=1$.

If we multiply these factors together, then the only orbits that are not counted twice are the orbits of roots $a\in \Phi_\theta$ with $a (t)\ne 1$.  Appealing to Lemma 4.3.1, we observe that, since $\theta (t) = t$ and $\theta a = -a$, we must have $a(t) =-1$ whenever $a(t)\ne 1$.

Therefore, $$\varepsilon_{{}_{\mathsf{T},\theta}} (t) =(-1)^s,$$
where $s$ is the number of $\Gamma$-orbits of roots $a\in \Phi_\theta$ such that $a(t) =-1$.
Since $$(-1)^s = \prod_{a\in \Gamma\bs \Phi_\theta} a(t),$$
our claim follows.
 \end{proof}
 
\subsection{Involutions that do not fix any roots}

The results in this section are not used elsewhere in this paper.  
Our objective is to show how Proposition  \ref{sigGT} and Lemma \ref{PhiTheta} may be applied to provide an alternative proof of
 Lemma 10.5 \cite{MR1106911}.

Let $\mathscr{I}$ be the set of $\F_q$-automorphisms of $\mathsf{G}$ of order two and let $\mathsf{G}(\F_q)$ act on $\mathscr{I}$ by
$$g\cdot \theta = {\rm Int}(g)\circ \theta\circ {\rm Int}(g)^{-1}.$$

Given $\theta\in \mathscr{I}$ and $g\in \mathsf{G}(\F_q)$, then
$\mathsf{T}$ is $(g\cdot \theta)$-stable precisely when $g^{-1}\mathsf{T}g$ is $\theta$-stable.
Therefore studying the elements of $\mathscr{I}$ that stabilize $\mathsf{T}$ is equivalent to studying the $\mathsf{G}(\F_q)$-conjugates of $\mathsf{T}$ that are $\theta$-stable.

We prefer to fix $\mathsf{T}$ and consider  various orbits of elements of $\mathscr{I}$ that stabilize $\mathsf{T}$ without choosing any specific involution $\theta$ as our base point.  
This differs from \cite{MR1106911} and thus some translating between approaches is required.

Lusztig fixes $\theta\in \mathscr{I}$ and defines an associated set $\mathscr{J}_\theta$.
According to \cite[\S10.1]{MR1106911}, the set $\mathscr{J}_\theta$ is identical to the set of $\theta$-stable maximal tori $\mathsf{T}'$ in $\mathsf{G}$ such that $\theta$ does not fix any roots in $\Phi (\mathsf{G},\mathsf{T}')$.

Let us now fix a maximal $\F_q$-torus $\mathsf{T}$ in $\mathsf{G}$ and let $\Phi = \Phi (\mathsf{G},\mathsf{T})$.

The next result is essentially  Lemma 10.5 \cite{MR1106911}.

\begin{lemma}
If $\theta\in \mathscr{I}$ and  $\mathsf{T}\in \mathscr{J}_\theta$ then $\sigma (Z_{\mathsf{G}}(\mathsf{T}_+)) = \sigma (\mathsf{G})$.
\end{lemma}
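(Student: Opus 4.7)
The plan is to reduce the claim, via Proposition \ref{sigGT}, to a parity statement about $\Gamma$-orbits, and then prove that parity statement by combining the involution $-\theta$ with negation on $\Phi$.

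First I would apply Proposition \ref{sigGT} to both $\mathsf{G}$ and to $Z := Z_{\mathsf{G}}((\mathsf{T}^\theta)^\circ)$. Since $\mathsf{T}$ is a maximal $\F_q$-torus of $Z$, and $\Phi(Z,\mathsf{T}) = \Phi_\theta$ by Lemma \ref{PhiTheta}, this yields
$$\sigma(\mathsf{G})\sigma(\mathsf{T}) = (-1)^{\#(\Gamma\backslash\Phi)}, \qquad \sigma(Z)\sigma(\mathsf{T}) = (-1)^{\#(\Gamma\backslash\Phi_\theta)}.$$
Dividing, the desired identity $\sigma(Z) = \sigma(\mathsf{G})$ is equivalent to the parity claim that $\#\bigl(\Gamma\backslash(\Phi\setminus\Phi_\theta)\bigr)$ is even.

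For the parity claim, I would introduce the involution $\alpha := -\theta$ on $\Phi$. By Lemma \ref{PhiTheta}, its fixed set is exactly $\Phi_\theta$, so $\alpha$ acts freely on $\Phi\setminus\Phi_\theta$; and since $\alpha$ commutes with $\Gamma$, it induces an involution on $\Gamma\backslash(\Phi\setminus\Phi_\theta)$. The orbits \emph{not} fixed by this induced involution pair up, so it remains to show that the number of $\alpha$-stable $\Gamma$-orbits in $\Phi\setminus\Phi_\theta$ is even. Here I would exploit the fact that $\alpha$ commutes with negation, so that negation $\mathscr{O}\mapsto -\mathscr{O}$ also permutes the $\alpha$-stable $\Gamma$-orbits in $\Phi\setminus\Phi_\theta$. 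This pairing is fixed-point-free provided no such orbit satisfies $\mathscr{O} = -\mathscr{O}$.

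The main obstacle, and the unique place where the hypothesis $\mathsf{T}\in\mathscr{J}_\theta$ is used, is ruling out symmetric $\alpha$-stable orbits in $\Phi\setminus\Phi_\theta$. I would argue as follows: if $\mathscr{O}$ is such an orbit of length $d$ and $a\in\mathscr{O}$, then the equation $-a = F^j a$ forces $d\mid 2j$ and $j\neq 0$, hence $j = d/2$; similarly, the orbit is $\theta$-stable, and since $\theta$ has no fixed roots (the hypothesis $\mathsf{T}\in\mathscr{J}_\theta$), the equation $\theta a = F^k a$ forces $k = d/2$ by the same squaring argument. Combining gives $\theta a = -a$, i.e.\ $a\in\Phi_\theta$, contradicting $\mathscr{O}\subset\Phi\setminus\Phi_\theta$. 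Hence negation pairs the $\alpha$-stable orbits without fixed points, the number of such orbits is even, and the parity claim, and with it the lemma, follows.
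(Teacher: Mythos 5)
Your proof is correct and is essentially the paper's argument: both reduce, via Proposition \ref{sigGT}, to showing that the number of $\Gamma$-orbits in $\Phi\setminus\Phi_\theta$ is even, and both establish the key obstruction — that no $\Gamma$-orbit can simultaneously contain (equivalently, be stable under) $a$, $-a$, $\theta a$, $-\theta a$ — by exploiting the uniqueness of the order-two element in the cyclic quotient $\Gamma/\Gamma_a$, which is exactly your $j=k=d/2$ computation forcing $\theta a=-a$. The only cosmetic difference is bookkeeping: you organize the parity count as two successive pairings (first by $-\theta$ on orbits, then by negation on the $-\theta$-stable ones), whereas the paper partitions the orbits into the $\Gamma$-invariant sets generated by the quadruples $\{a,-a,\theta a,-\theta a\}$ and notes each contributes evenly.
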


\begin{proof}
According to Proposition  \ref{sigGT}, it suffices to show that the number of $\Gamma$-orbits in $\Phi -\Phi_\theta$ is even.

Consider a root $a\in \Phi - \Phi_\theta$.   According to Lemma \ref{PhiTheta} and the hypothesis  that $\mathsf{T}\in \mathscr{J}_\theta$,
 the roots $a,-a,\theta a, -\theta a $ must be distinct.
 
We claim that these four roots cannot all lie in a single $\Gamma$-orbit.
Suppose, to the contrary, that they do lie in the same orbit.  Let $\Gamma_a$ be the stabilizer of $a$ in $\Gamma$.  Let $F_a$ be the fixed field of $\Gamma_a$ in $\overline{\F}_q$.  There must exist $\gamma\in \Gamma/\Gamma_a = {\rm Gal}(F_a/\F_q )$ such that $\gamma a = -a$.  Clearly, $\gamma$ has order two, and hence $\gamma$ is the unique element of order two in the cyclic group ${\rm Gal}(F_a/\F_q )$.  Similarly, one can argue that $\gamma a = \theta a$.  Hence, we deduce $\theta a = -a$, which contradicts Lemma \ref{PhiTheta}.

The latter contradiction implies that  the number of  $\Gamma$-orbits in the $\Gamma$-invariant set generated by $a,-a,\theta a, -\theta a $ is even.  But since $\Phi-\Phi_\theta$ is partitioned into $\Gamma$-invariant sets generated by the sets  $\{ a, -a,  \theta a , -\theta a\}$ associated to $a\in \Phi-\Phi_\theta$, our claim follows.
\end{proof}
 
\subsection{Revising Lusztig's formula}

Our main objective in this section is to establish the finite field case of  Theorem \ref{maintheorem} and show that it is consistent with Lusztig's results in \cite{MR1106911}.
We also show in Theorem \ref{newLusztigthm} how Theorem \ref{maintheorem} simplifies over finite fields.

Lusztig gives a symmetric space generalization of  Deligne-Lus\-ztig virtual characters in \cite{MR1106911},  and, in  Theorem 3.3 \cite{MR1106911}, he provides a formula for these generalized virtual characters.  A much simpler formula, \cite[10.6(a)]{MR1106911}, treats the special case of irreducible cuspidal representations.  It is this simpler formula that we revise.

We remark that Theorem 3.3 \cite{MR1106911} has been reformulated and generalized in \cite{MR2798427}, \cite{MR2925798} and \cite{MR3027804}.

As in the previous sections, we fix a connected, reductive group $\mathsf{G}$ defined over $\F_q$.  We also fix a maximal torus $\mathsf{T}$ in $\mathsf{G}$ that is defined and elliptic over $\F_q$.  Fix a character $\lambda$ of $\mathsf{T}(\F_p)$ that is in general position.
Let $\pi = \pi(\lambda)$ be an irreducible, cuspidal representation in the equivalence class associated by Deligne-Lusztig to $(\mathsf{T}(\F_q),\lambda)$.
Then the character of $\pi$ is
$$\sigma (\mathsf{T})\, \sigma (\mathsf{G})\, R_{\mathsf{T}}^\lambda ,$$ where
 $R^\lambda_{\mathsf{T}}$ is the Deligne-Lusztig virtual character associated to\break $(\mathsf{T}(\F_q),\lambda)$.
 
Lusztig's formula 10.6(a) says
 $$\dim (\pi^{\mathsf{G}^\theta (\F_q)}) = \# \left(\mathsf{T}(\F_q)\bs \Theta_{\mathsf{T},\lambda}(\F_q)/ \mathsf{G}^\theta (\F_q)\right),$$ where   $\theta\in \mathscr{I}$, $\pi^{\mathsf{G}^\theta (\F_q)}$ is the space of $\mathsf{G}^\theta (\F_q)$-fixed points in the space of $\pi$, and 
 $$ \Theta_{\mathsf{T},\lambda}(\F_q)= \left\{ g\in \mathsf{G}(\F_q)\, :\, ( g\cdot\theta)(\mathsf{T}) = \mathsf{T},\ \lambda |\mathsf{T}^{g\cdot \theta}(\F_q) = \varepsilon_{{}_{\mathsf{T},g\cdot\theta}}\right\}.$$

 Given a $\mathsf{G}(\F_q)$-orbit $\Theta$ in $\mathscr{I}$, we define
 $$\langle \Theta ,\lambda \rangle_{\mathsf{G}(\F_q)} = \dim {\rm Hom}_{\mathsf{G}(\F_q)^\theta}(\pi,1),$$ where $\theta$ is an arbitrary element of $\mathscr{O}$.
 This invariant is identical to the invariant on the left hand side of Lusztig's formula 10.6(a) according to the following
 standard lemma:
 
 \begin{lemma}
 Assume $(\rho,\mathscr{V})$ be an irreducible, complex representation of a finite group $\mathscr{G}$.
 If $\mathscr{H}$ is a subgroup of $\mathscr{G}$ and $V^{\mathscr{H}}$ is the space of $\mathscr{H}$-fixed points in $\mathscr{V}$ then
 $$\dim {\rm Hom}_{\mathscr{H}}(\rho, 1) = \dim V^{\mathscr{H}}.$$
 \end{lemma}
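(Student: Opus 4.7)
The plan is to identify ${\rm Hom}_{\mathscr{H}}(\rho, 1)$ with the space $(\mathscr{V}^*)^{\mathscr{H}}$ of $\mathscr{H}$-invariant vectors in the contragredient representation $\rho^*$, and then show by an averaging argument that this has the same dimension as $\mathscr{V}^{\mathscr{H}}$. Irreducibility of $\rho$ plays no role in the argument, and neither does the full group $\mathscr{G}$; the statement is really a fact about finite-dimensional representations of the finite group $\mathscr{H}$.

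First I would unpack definitions.  An element of ${\rm Hom}_{\mathscr{H}}(\rho,1)$ is by definition a linear functional $\lambda\in\mathscr{V}^*$ satisfying $\lambda(\rho(h)v)=\lambda(v)$ for all $h\in\mathscr{H}$ and $v\in\mathscr{V}$.  Endowing $\mathscr{V}^*$ with the contragredient action $(\rho^*(h)\lambda)(v)=\lambda(\rho(h)^{-1}v)$, this condition is precisely the statement that $\lambda$ is fixed by $\rho^*(h)$ for all $h\in\mathscr{H}$.  Hence there is a canonical identification
$${\rm Hom}_{\mathscr{H}}(\rho,1)\cong (\mathscr{V}^*)^{\mathscr{H}}.$$

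Next, since $|\mathscr{H}|$ is invertible in $\C$, the operator
$$P=\frac{1}{|\mathscr{H}|}\sum_{h\in\mathscr{H}}\rho(h)$$
is an idempotent on $\mathscr{V}$ whose image is exactly $\mathscr{V}^{\mathscr{H}}$, so $\dim \mathscr{V}^{\mathscr{H}}={\rm tr}(P)=\frac{1}{|\mathscr{H}|}\sum_{h\in\mathscr{H}}{\rm tr}(\rho(h))$.  Applying the same formula to $\rho^*$ and using ${\rm tr}(\rho^*(h))={\rm tr}(\rho(h^{-1}))$ together with the fact that $h\mapsto h^{-1}$ is a bijection of $\mathscr{H}$ yields
$$\dim (\mathscr{V}^*)^{\mathscr{H}}=\frac{1}{|\mathscr{H}|}\sum_{h\in\mathscr{H}}{\rm tr}(\rho(h^{-1}))=\dim\mathscr{V}^{\mathscr{H}},$$
which, combined with the identification above, gives the lemma.

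There is really no obstacle here; the only thing to watch for is the appearance of the inverse in the contragredient action, which is harmless because the averaging sum is taken over all of $\mathscr{H}$.  As an alternative, one could instead use the projector $P$ to split the inclusion $\mathscr{V}^{\mathscr{H}}\hookrightarrow\mathscr{V}$ and identify $\mathscr{V}^{\mathscr{H}}$ with the space $\mathscr{V}_{\mathscr{H}}$ of coinvariants, then note directly that ${\rm Hom}_{\mathscr{H}}(\rho,1)=(\mathscr{V}_{\mathscr{H}})^*$.
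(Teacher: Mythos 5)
Your proof is correct. Both you and the paper begin from the same observation (stated explicitly by you, implicitly in the paper) that ${\rm Hom}_{\mathscr{H}}(\rho,1)$ is exactly the space $(\mathscr{V}^*)^{\mathscr{H}}$ of $\mathscr{H}$-invariants in the contragredient, and both exploit the averaging operator $P=\frac{1}{|\mathscr{H}|}\sum_{h\in\mathscr{H}}\rho(h)$; where you diverge is in how you compare $\dim\mathscr{V}^{\mathscr{H}}$ with $\dim(\mathscr{V}^*)^{\mathscr{H}}$. You compute both as the trace of the corresponding idempotent and use ${\rm tr}(\rho^*(h))={\rm tr}(\rho(h^{-1}))$ together with the bijection $h\mapsto h^{-1}$ -- a character-theoretic shortcut that settles the matter in one line. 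The paper instead uses $P$ to split $\mathscr{V}=\mathscr{V}^{\mathscr{H}}\oplus\mathscr{V}_{\mathscr{H}}$ (with $\mathscr{V}_{\mathscr{H}}=\ker P$), dualizes this decomposition, embeds $(\mathscr{V}^{\mathscr{H}})^*$ into $(\mathscr{V}^*)^{\mathscr{H}}$ by extension-by-zero on the complement, and concludes by dimension counting; this is essentially the route you sketch in your closing alternative (modulo your use of ``coinvariants'' for what the paper calls $\mathscr{V}_{\mathscr{H}}$, which coincide here since $|\mathscr{H}|$ is invertible). Your trace argument is shorter and makes transparent that neither irreducibility of $\rho$ nor the ambient group $\mathscr{G}$ plays any role; the paper's decomposition argument has the mild advantage of producing a canonical isomorphism $(\mathscr{V}^{\mathscr{H}})^*\cong(\mathscr{V}^*)^{\mathscr{H}}$ rather than only an equality of dimensions, though the lemma as stated needs only the latter.
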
 
 
 \begin{proof}
 Let $\mathscr{V}^*$ be the space of $\C$-linear forms on $\mathscr{V}$ and let $(\tilde\rho, \mathscr{V}^*)$ be the representation given by $$(\tilde\rho (g)\lambda )(v) = \lambda (\rho(g)^{-1}v).$$
 Then $\tilde\rho$ is contragredient to $\rho$.
 
 The space $\mathscr{V}$ has a canonical decomposition $\mathscr{V}^{\mathscr{H}}\oplus \mathscr{V}_{\mathscr{H}}$, into $\mathscr{H}$-submodules, where $\mathscr{V}_{\mathscr{H}}$ is the kernel of the projection $\mathscr{V}\to \mathscr{V}^{\mathscr{H}}$ given by
 $$v\mapsto \frac{1}{|\mathscr{H}|} \sum_{h\in\mathscr{H}} \rho (h)v.$$
 
The contragredient has a similar decomposition $\mathscr{V}^*= \mathscr{V}^{*\mathscr{H}}\oplus \mathscr{V}^*_{\mathscr{H}}$.
A given linear form $\lambda$ on $\mathscr{V}^{\mathscr{H}}$ extends uniquely to a linear form on $\mathscr{V}$ that annihilates $\mathscr{V}_{\mathscr{H}}$.  This yields an embedding of $(\mathscr{V}^{\mathscr{H}})^*$ in $(\mathscr{V}^*)^{\mathscr{H}}$.  Similarly, $(\mathscr{V}_{\mathscr{H}})^*$ embeds in $(\mathscr{V}^*)_{\mathscr{H}}$.  Counting dimensions, we see that $(\mathscr{V}^{\mathscr{H}})^* =\mathscr{V}^{*\mathscr{H}}$ and $(\mathscr{V}_{\mathscr{H}})^*= (\mathscr{V}^*)_{\mathscr{H}}$.

We now have $$\dim \mathscr{V}^{\mathscr{H}} = \dim (\mathscr{V}^{\mathscr{H}})^* = \dim (\mathscr{V}^*)^{\mathscr{H}} = \dim  {\rm Hom}_{\mathscr{H}}(\rho, 1).$$
 \end{proof}
 
 The next result is the finite field case of Theorem \ref{maintheorem}:
 
 \begin{proposition}\label{mainfiniteprop}
$$\langle \Theta ,\lambda\rangle_{\mathsf{G}(\F_q)} = \sum_{\vartheta\sim \lambda} {\rm m}_{\mathsf{T}(\F_q)}(\vartheta ) \ \langle \vartheta , \lambda\rangle_{\mathsf{T}(\F_q)}.$$
\end{proposition}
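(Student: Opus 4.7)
The strategy is to deduce the formula from Lusztig's cuspidal formula in \cite[10.6(a)]{MR1106911}, combined with the identification of Lusztig's $\varepsilon$ with $\varepsilon_{{}_{\mathsf{T},\theta}}$ supplied by Proposition \ref{Lusztigisdetad}, followed by a combinatorial reindexing of the sum.

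First, I would fix a base involution $\theta_0 \in \Theta$. Lusztig's formula expresses $\dim {\rm Hom}_{\mathsf{G}(\F_q)^{\theta_0}}(\pi(\lambda),1)$ as a sum indexed by $\mathsf{G}(\F_q)^{\theta_0}$-orbits of $\theta_0$-stable maximal $\F_q$-tori in the $\mathsf{G}(\F_q)$-conjugacy class of $\mathsf{T}$; the summand attached to each such $\mathsf{T}' = g^{-1}\mathsf{T}g$ is the dimension of a Hom-space between the transported character $\lambda^g$ on $\mathsf{T}'(\F_q)^{\theta_0}$ and Lusztig's character $\varepsilon$ on the same group. By Proposition \ref{Lusztigisdetad}, this is precisely $\dim {\rm Hom}_{\mathsf{T}'(\F_q)^{\theta_0}}(\lambda^g,\varepsilon_{{}_{\mathsf{T}',\theta_0}})$, so each summand is already of the shape appearing on the right-hand side of our proposition.

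Second, I would translate via the bijection
$$\{\, \mathsf{T}' = g^{-1}\mathsf{T}g \,:\, g\in \mathsf{G}(\F_q),\ \theta_0(\mathsf{T}') = \mathsf{T}' \,\} \longleftrightarrow \{\,\theta\in\Theta \,:\, \theta(\mathsf{T})=\mathsf{T}\,\},\quad g^{-1}\mathsf{T}g \mapsto g\cdot\theta_0,$$
under which $\mathsf{G}(\F_q)^{\theta_0}$-orbits of tori on the left correspond to $\mathsf{T}(\F_q)$-orbits $\vartheta$ of involutions on the right. A double-coset count, analogous to the decomposition recalled in $\S\ref{sec:symmetry}$, shows that the multiplicity collected by this reindexing is precisely ${\rm m}_{\mathsf{T}(\F_q)}(\vartheta) = [\mathsf{G}(\F_q)_\theta : \mathsf{G}(\F_q)^\theta \mathsf{T}(\F_q)_\theta]$ for any $\theta\in\vartheta$. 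Lusztig's sum then rewrites as $\sum_\vartheta {\rm m}_{\mathsf{T}(\F_q)}(\vartheta)\dim {\rm Hom}_{\mathsf{T}(\F_q)^\theta}(\lambda,\varepsilon_{{}_{\mathsf{T},\theta}})$, and restricting to $\vartheta\sim\lambda$ is harmless since the other summands vanish by definition.

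The principal obstacle is the second step: verifying that the constant produced by the orbit translation is exactly ${\rm m}_{\mathsf{T}(\F_q)}(\vartheta)$ rather than some related subgroup index. This requires careful stabilizer bookkeeping, distinguishing $\mathsf{G}(\F_q)^{\theta_0}$ from $\mathsf{G}(\F_q)_{\theta_0}$ and allowing a single $\mathsf{G}(\F_q)^{\theta_0}$-orbit of tori to split into several $\mathsf{T}(\F_q)$-orbits of involutions; but it is purely group-theoretic and structurally parallel to the $p$-adic orbit decomposition used in $\S\ref{sec:symmetry}$.
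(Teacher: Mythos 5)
Your overall strategy is the one the paper follows: read off the cuspidal multiplicity from Lusztig's formula 10.6(a), use Proposition \ref{Lusztigisdetad} to identify Lusztig's $\varepsilon$ with $\varepsilon_{{}_{\mathsf{T},\theta}}$, and then reindex the resulting count by $\mathsf{T}(\F_q)$-orbits of involutions, with the reindexing constant being ${\rm m}_{\mathsf{T}(\F_q)}(\vartheta)$. Two points of comparison are worth making. First, your restatement of 10.6(a) as a sum over $\mathsf{G}(\F_q)^{\theta_0}$-orbits of $\theta_0$-stable tori, with a per-torus summand $\dim {\rm Hom}_{\mathsf{T}'(\F_q)^{\theta_0}}(\lambda^g,\varepsilon_{{}_{\mathsf{T}',\theta_0}})$, is not well defined as written: both the transported character $\lambda^g$ and the involution $g\cdot\theta_0$ depend on $g$ through its coset modulo $N_{\mathsf{G}}(\mathsf{T})(\F_q)$, not merely through the torus $g^{-1}\mathsf{T}g$, so the "bijection'' $g^{-1}\mathsf{T}g\mapsto g\cdot\theta_0$ and the attached Hom-space are ambiguous torus by torus. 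The paper sidesteps this entirely by keeping 10.6(a) at the level of group elements: the right-hand side is the cardinality of $\mathsf{T}(\F_q)\bs \Theta_{\mathsf{T},\lambda}(\F_q)/\mathsf{G}^{\theta}(\F_q)$, where $\Theta_{\mathsf{T},\lambda}(\F_q)$ is the set of $g$ with $(g\cdot\theta)(\mathsf{T})=\mathsf{T}$ and $\lambda|\mathsf{T}^{g\cdot\theta}(\F_q)=\varepsilon_{{}_{\mathsf{T},g\cdot\theta}}$, and this set is partitioned according to the $\mathsf{T}(\F_q)$-orbit $\vartheta$ of $g\cdot\theta$; the piece attached to $\vartheta$ is nonempty exactly when $\vartheta\sim\lambda$, in which case $\langle\vartheta,\lambda\rangle_{\mathsf{T}(\F_q)}=1$, so the identity to be proved is genuinely a count.

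Second, the step you defer as the "principal obstacle'' is precisely the content of the paper's proof, and it goes through as you predict, in a few lines: the bijection $\mathsf{G}(\F_q)/\mathsf{G}_{\theta}(\F_q)\cong\Theta$ induces a surjection $\mathsf{T}(\F_q)\bs\mathsf{G}(\F_q)/\mathsf{G}^{\theta}(\F_q)\to\mathsf{T}(\F_q)\bs\Theta$, and the fiber over $\vartheta$ is exactly $\mathsf{T}(\F_q)\bs\Theta_{\mathsf{T},\lambda}(\F_q)_\vartheta/\mathsf{G}^{\theta}(\F_q)$, of cardinality $[\mathsf{G}_{\theta'}(\F_q):\mathsf{G}^{\theta'}(\F_q)(\mathsf{G}_{\theta'}(\F_q)\cap\mathsf{T}(\F_q))]={\rm m}_{\mathsf{T}(\F_q)}(\vartheta)$ for $\theta'\in\vartheta$. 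So your proposal is salvageable essentially as stated, but to make it a proof you should replace the torus-indexed reformulation by the element-level double-coset count and actually carry out this fiber computation rather than appeal to an analogy with \S\ref{sec:symmetry}.
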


 \begin{proof}
 The right hand side of Lusztig's formula 10.6(a) is the cardinality of the double coset space
 $$\mathsf{T}(\F_q)\bs \Theta_{\mathsf{T},\lambda}(\F_q)/ \mathsf{G}^\theta (\F_q),$$ where $\theta$ is any element of $\Theta$.
 Let us now fix such an involution $\theta$.
 
The set $ \Theta_{\mathsf{T},\lambda}(\F_q)$ may be partitioned as follows
$$ \Theta_{\mathsf{T},\lambda}(\F_q) = \bigsqcup_{\vartheta\in \mathsf{T}(\F_q)\bs \Theta} \Theta_{\mathsf{T},\lambda}(\F_q)_\vartheta,$$
where
$$\Theta_{\mathsf{T},\lambda}(\F_q)_\vartheta = \left\{ g\in \Theta_{\mathsf{T},\lambda}(\F_q)\, :\, g\cdot \theta\in \vartheta \right\}. $$

(Note that it is elementary to see that both $ \Theta_{\mathsf{T},\lambda}(\F_q)$ and the sets $ \Theta_{\mathsf{T},\lambda}(\F_q)_\vartheta$ are unions of double cosets in $\mathsf{T}(\F_q)\bs \mathsf{G}(\F_q)/ \mathsf{G}^\theta (\F_q)$.)

Given a $\mathsf{T}(\F_q)$-orbit $\vartheta$ in $\Theta$, recall that we write $\vartheta\sim\lambda$ when $\theta' (\mathsf{T}(\F_q))=\mathsf{T}(\F_q)$ and ${\rm Hom}_{\mathsf{T}(\F_q)^{\theta'} }(\lambda,\varepsilon_{{}_{\mathsf{T},\theta'}})$ is nonzero for $\theta'\in \vartheta$.  Equivalently, $\vartheta\sim\lambda$ when $\theta' (\mathsf{T})=\mathsf{T}$ and $\lambda |\mathsf{T}(\F_q)^{\theta'}  =\varepsilon_{{}_{\mathsf{T},\theta'}}$ for $\theta'\in \vartheta$.
We observe that $\Theta_{\mathsf{T},\lambda}(\F_q)_\vartheta$ is nonempty precisely when $\vartheta\sim\lambda$.  So we have
$$ \Theta_{\mathsf{T},\lambda}(\F_q) = \bigsqcup_{\vartheta\sim\lambda } \Theta_{\mathsf{T},\lambda}(\F_q)_\vartheta .$$
Moreover, we observe that if $\vartheta\sim \lambda$ then 
$$ \Theta_{\mathsf{T},\lambda}(\F_q)_\vartheta =\{ g\in \mathsf{G}(\F_q)\, :\, g\cdot \theta \in \vartheta\}.$$

Next, we recall that if $\vartheta\sim \lambda$ then $\langle \vartheta ,\lambda\rangle_{\mathsf{T}(\F_q)}$ is defined as 
$$\langle \vartheta ,\lambda\rangle_{\mathsf{T}(\F_q)} = \dim {\rm Hom}_{\mathsf{T}(\F_q)^{\theta'}}(\lambda ,  \varepsilon_{{}_{\mathsf{T}, \theta'}}),$$ for $\theta'\in \vartheta$.  But this simply means that $\vartheta\sim \lambda$ then $\langle \vartheta ,\lambda\rangle_{\mathsf{T}(\F_q)}=1$.

It now suffices to show that if $\vartheta\sim\lambda$ then the cardinality of 
 $$\mathsf{T}(\F_q)\bs \Theta_{\mathsf{T},\lambda}(\F_q)_\vartheta/ \mathsf{G}^\theta (\F_q)$$
 is 
$${\rm m}_{\mathsf{T}(\F_q)}(\vartheta) = [\mathsf{G}_{\theta'} (\F_q):\mathsf{G}^{\theta'} (\F_q) (\mathsf{G}_{\theta'}(\F_q)\cap\mathsf{T}(\F_q))],$$
where $\theta'\in \vartheta$ and 
$\mathsf{G}_{\theta'} (\F_q)$ is the stabilizer of $\theta'$ relative to the action of $\mathsf{G} (\F_q)$ on $\mathscr{I}$.

We have a bijection
$$\mathsf{G} (\F_q)/\mathsf{G}_\theta (\F_q)\cong \Theta$$ given by $g\mathsf{G}_\theta(\F_q)\mapsto g\cdot\theta$.
This yields a bijection 
$$\mathsf{T}(\F_q)\bs \mathsf{G}(\F_q)/\mathsf{G}_\theta (\F_q)\cong \mathsf{T}(\F_q)\bs \Theta .$$ This bijection can be pulled back via the natural projection
$$\mathsf{T}(\F_q)\bs \mathsf{G}(\F_q)/\mathsf{G}^\theta(\F_q)\to
\mathsf{T}(\F_q)\bs \mathsf{G}(\F_q)/\mathsf{G}_\theta (\F_q)$$
to yield a surjection
$$\mathsf{T}(\F_q)\bs \mathsf{G}(\F_q)/\mathsf{G}^\theta(\F_q)\to
\mathsf{T}(\F_q)\bs \Theta .$$
The constant ${\rm m}_{\mathsf{T}(\F_q)}(\vartheta)$ is identical to the cardinality of the fiber of $\vartheta$ under the latter surjection.
Since this fiber is precisely  $$\mathsf{T}(\F_q)\bs \Theta_{\mathsf{T},\lambda}(\F_q)_\vartheta/ \mathsf{G}^\theta (\F_q)$$
our assertion follows.
\end{proof}

The purpose of the latter result was to provide a formula that unifies the $p$-adic and finite field theories.  However, if one is specifically interested in the finite field theory then the following result is stronger and follows from the previous proof.

\begin{theorem}\label{newLusztigthm}
$$\langle \Theta ,\lambda\rangle_{\mathsf{G}(\F_q)} = \sum_\vartheta {\rm m}_{\mathsf{T}(\F_q)}(\vartheta ),$$ where the sum is over the set of $\mathsf{T}(\F_q)$-orbits $\vartheta$ in $\Theta$ such that
$\theta (\mathsf{T}) =\mathsf{T}$ and $$\lambda (t) = \varepsilon_{{}_{\mathsf{T},\theta}} (t) 
 =\prod_{a\in \Gamma\bs \Phi_\theta} a(t)  ,$$ for some (hence all) $\theta\in \vartheta$ and for all $t\in \mathsf{T}^\theta (\F_q)$.
\end{theorem}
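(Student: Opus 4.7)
The plan is to obtain Theorem \ref{newLusztigthm} as an immediate refinement of Proposition \ref{mainfiniteprop}, using two small observations that were essentially already recorded inside the proof of that proposition.

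First, I would note that since $\lambda$ is a one-dimensional character of $\mathsf{T}(\F_q)$ and $\varepsilon_{{}_{\mathsf{T},\theta}}$ is a one-dimensional character of $\mathsf{T}(\F_q)^\theta$, the space ${\rm Hom}_{\mathsf{T}(\F_q)^\theta}(\lambda,\varepsilon_{{}_{\mathsf{T},\theta}})$ has dimension $0$ or $1$, with dimension $1$ occurring precisely when $\lambda|\mathsf{T}(\F_q)^\theta = \varepsilon_{{}_{\mathsf{T},\theta}}$. Thus the condition $\vartheta \sim \lambda$ automatically forces $\langle \vartheta,\lambda\rangle_{\mathsf{T}(\F_q)} = 1$, so the formula of Proposition \ref{mainfiniteprop} collapses to
$$\langle \Theta ,\lambda\rangle_{\mathsf{G}(\F_q)} = \sum_{\vartheta\sim \lambda} {\rm m}_{\mathsf{T}(\F_q)}(\vartheta ).$$

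Second, I would translate the indexing condition $\vartheta \sim \lambda$ into its concrete determinantal form by invoking Proposition \ref{Lusztigisdetad}, which identifies $\varepsilon_{{}_{\mathsf{T},\theta}}(t) = \det(\Ad(t)|{\rm Lie}(\mathsf{G})^\theta(\F_q))$ for all $t\in \mathsf{T}^\theta(\F_q)$. By definition, $\vartheta \sim \lambda$ asserts that $\theta(\mathsf{T}) = \mathsf{T}$ and $\lambda|\mathsf{T}(\F_q)^\theta = \varepsilon_{{}_{\mathsf{T},\theta}}$; substituting the determinantal expression yields precisely the indexing condition stated in the theorem. As already noted in Section \ref{sec:finitestuff}, this condition is independent of the choice of representative $\theta \in \vartheta$, so the indexed sum is well defined.

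There is no real obstacle here: the entire content of the theorem lies in Proposition \ref{mainfiniteprop} combined with the one-dimensionality of $\lambda$ and the explicit formula of Proposition \ref{Lusztigisdetad}. The only point that needs a moment's thought is that dropping the factor $\langle \vartheta,\lambda\rangle_{\mathsf{T}(\F_q)}$ from the sum is legitimate, which is the triviality recorded in the first paragraph above.
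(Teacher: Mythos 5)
Your proposal is correct and is essentially the paper's own argument: the paper derives Theorem \ref{newLusztigthm} as an immediate consequence of the proof of Proposition \ref{mainfiniteprop}, where it is already observed that $\vartheta\sim\lambda$ forces $\langle \vartheta,\lambda\rangle_{\mathsf{T}(\F_q)}=1$, and the indexing condition is rewritten via Proposition \ref{Lusztigisdetad} exactly as you do.
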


\bibliographystyle{amsalpha}
\bibliography{JLHrefs}

\end{document}